\newtheorem{theorem}{Theorem}[section]
\newtheorem{lemma}[theorem]{Lemma}
\newtheorem{corollary}[theorem]{Corollary}
\newtheorem{definition}[theorem]{Definition}
\numberwithin{equation}{section}
\newenvironment{proof}[1][Proof]{\noindent\textbf{#1.} }{\hfill $\Box$}
 \makeatletter\setlength{\textwidth}{15.0cm}
\begin{document}
\title{{BKM's criterion for the 3D nematic liquid crystal flows via two velocity components and molecular orientations}\footnote{This paper is partially supported by the National Natural Science Foundation of China (11501453), the Fundamental Research Project of Natural Science in Shaanxi Province--Young Talent Project (2015JQ1004)  and the Fundamental Research Funds for the Central Universities (2014YB031).}}
\author{Jihong Zhao\footnote{Email address: jihzhao@163.com.}\\
[0.2cm] {\small Institute of Applied Mathematics, College of Science, Northwest A\&F
University,}\\
[0.2cm] {\small Yangling, Shaanxi 712100, China}}
\date{}
\maketitle

\begin{abstract}
In this paper we provide a sufficient condition, in terms of the horizontal gradient of two horizontal velocity components and the gradient of liquid crystal molecular orientation field, for the breakdown of local
in time strong solutions to the three-dimensional incompressible nematic liquid
crystal flows. More
precisely, let $T_{*}$ be the maximal existence time of the local
strong solution $(u, d)$, then $T_{*}<+\infty$ if and only if
\begin{align*}
  \int_{0}^{T_{*}}\big(\|\nabla_{h} u^{h}\|_{\dot{B}^{0}_{p,\frac{2p}{3}}}^{q}+\|\nabla d\|_{\dot{B}^{0}_{\infty,\infty}}^{2}\big)dt=\infty\ \ \text{with}
  \ \ \ \frac{3}{p}+\frac{2}{q}=2,\ \ \frac{3}{2}<p\leq\infty,
\end{align*}
where $u^{h}=(u^{1},u^{2})$, $\nabla_{h}=(\partial_{1}, \partial_{2})$.
This result can be regarded as the generalization of the BKM's criterion in \cite{HW12}, and is even new for the three-dimensional incompressible Navier-Stokes equations.

\textbf{Keywords}: Nematic liquid crystal flows; Navier-Stokes equations;
strong solution; BKM's criterion

\textbf{2010 AMS Subject Classification}: 76A15, 35B44, 35Q35
\end{abstract}

\section{Introduction}
The three-dimensional viscous incompressible flow of liquid crystals read:
\begin{equation}\label{eq1.1}
\begin{cases}
  \partial_{t}u+(u\cdot\nabla)u-\nu\Delta u+\nabla\Pi=-\lambda\nabla\cdot(\nabla d \otimes\nabla d) &\text{in}\ \mathbb{R}^{3}\times (0,+\infty),\\
  \nabla\cdot u=0 &\text{in}\ \mathbb{R}^{3}\times (0,+\infty),\\
  \partial_{t}d+(u\cdot\nabla)d=\gamma(\Delta d+|\nabla d|^{2}d) &\text{in}\ \mathbb{R}^{3}\times (0,+\infty),\\
  (u,d)|_{t=0}=(u_{0},d_{0}) &\text{in}\ \mathbb{R}^{3}.
\end{cases}
\end{equation}
Here $u=(u^{1},u^{2},u^{3})\in\mathbb{R}^{3}$ and $\Pi\in\mathbb{R}$ denote the unknown velocity vector field and the scalar pressure field of the fluid motion, respectively, $d=(d^{1},d^{2},d^{3})\in\mathbb{S}^{2}$, the unit sphere in $\mathbb{R}^{3}$, is a unit-vector field that denotes the macroscopic molecular orientation of the nematic liquid crystal material, $\nu$,
$\lambda$, and $\gamma$ are positive constants which represent
viscosity, the competition between kinetic energy and potential
energy, and microscopic elastic relaxation time or the Deborah number for the molecular
orientation field, $u_{0}$ and $d_{0}$ are the initial datum of $u$ and $d$,
and $u_{0}$ satisfies $\nabla\cdot u_{0}=0$ in the distributional
sense. Here and in what follows, we denote by $\nabla$ and $\nabla\cdot$ the gradient operator $(\partial_{1}, \partial_{2}, \partial_{3})$ and the divergence operator, and
\begin{equation*}
  (u\cdot\nabla)v=(\sum_{i=1}^{3}u^{i}\partial_{i}v^{j})_{j=1}^{3},\ \ \
  \nabla\cdot u=\sum_{i=1}^{3}\partial_{i}u^{i}\ \ \ \text{for}\ \ \ u,v\in \mathbb{R}^{3}.
\end{equation*}
The term $\nabla d\otimes\nabla d$ in the stress
tensor represents the anisotropic feature of the system, which is
the $3\times 3$ matrix whose $(i,j)$-th entry is given by
$\partial_{i}d\cdot
\partial_{j}d$ ($1\leq i,j\leq 3$).
Moreover, it is easy to verify that
\begin{equation}\label{eq1.2}
  \nabla\cdot(\nabla d\otimes \nabla d)=\nabla(\frac{|\nabla d|^{2}}{2})+\Delta d\cdot\nabla
  d.
\end{equation}
Since the exact values of the constants $\nu$, $\lambda$, and $\gamma$ do not play a special role in our discussion,
for simplicity, we henceforth assume that $\nu=\lambda=\gamma=1$ throughout this paper.

The hydrodynamic theory of liquid crystals was developed by Ericksen and Leslie during the period of 1958 through 1968 (see \cite{E61}, \cite{L68}, \cite{L79}). The system \eqref{eq1.1} is a simplified version of Ericksen-Leslie model, which reduces to the
Ossen-Frank model in the static case, it is a macroscopic continuum description
of the time evolution of the materials under the influence of both the flow field $u$, and the macroscopic description
of the microscopic orientation configurations $d$ of rod-like liquid crystals.  Mathematically, the system
\eqref{eq1.1} is a strongly nonlinear coupled system between the non-homogeneous incompressible Navier-Stokes equations ( the case $d$ equals a constant unit vector, see \eqref{eq1.3} below) and the transported heat flow
of harmonic maps (the case $u\equiv0$) into $\mathbb{S}^{2}$, therefore its mathematical analysis is full of challenges, see the
expository paper of Lin and Wang \cite{LW14} for more details.

In 1989, Lin \cite{L89} first derived a simplified Ericksen-Leslie system modeling liquid crystal flows, and then made some important analytic studies of this simplified system in \cite{LL95} and \cite{LL96} with collaborator Liu. More precisely, they considered the Ginzburg-Landau
approximation or the so-called orientation with variable degrees in the terminology of Ericksen, i.e.,
the Dirichlet energy $\int_{\mathbb{R}^{3}}\frac{|\nabla d|^{2}}{2}dx$ for $d:\mathbb{R}^{3}\rightarrow\mathbb{S}^{2}$ is replaced by the Ginzburg-Landau
energy $\int_{\mathbb{R}^{3}}\frac{|\nabla d|^{2}}{2}+\frac{(1-|d|^{2})^{2}}{4\varepsilon^{2}}dx$ ($\varepsilon>0$) for  $d:\mathbb{R}^{3}\rightarrow\mathbb{R}^{3}$, thus the third equation of \eqref{eq1.1} is replaced by
\begin{equation*}
  \partial_{t}d+(u\cdot\nabla)d=\gamma(\Delta d+\frac{1}{\varepsilon^{2}}(1-|d|^{2})d).
\end{equation*}
Under this
simplification, Lin and Liu proved in \cite{LL95} the global existence of a unique strong solution in dimension two and in dimension three under the large
viscosity $\nu$, moreover, in
\cite{LL96}, they proved the existence of suitable weak solutions and the one-dimensional space-time
Hausdorff measure of the singular set of suitable weak solutions
is zero, analogous to the celebrated partial regularity theorem by Caffarelli-Kohn-Nirenberg \cite{CKN82} for the three-dimensional incompressible Navier-Stokes equations. Note that the regularity and uniqueness of global weak solutions to the three-dimensional simplified system are still open problems, we refer the readers to see \cite{LZC11}, \cite{WLY15}, \cite{ZLC13} and \cite{ZF10} for some recent results concerning the global existence of strong solutions with small initial data, regularity and uniqueness criteria of weak solutions to the simplified Ericksen-Leslie system.

Recently, many attempts on rigorous mathematical analysis of the system \eqref{eq1.1} were established.  Lin, Lin and Wang
\cite{LLW10} studied the Dirichlet initial-boundary problem of \eqref{eq1.1}, they proved the results that for any
$(u_{0}, d_{0})\in L^{2}(\Omega, \mathbb{R}^{2})\times H^{1}(\Omega, \mathbb{S}^{2})$ ($\Omega\subset\mathbb{R}^{2}$ is a bounded smooth domain), there exists a global Leray-Hopf weak
solution $(u,d)$ that is smooth away from at most finitely many singularity times, see also Hong \cite{H11}. The uniqueness of such
weak solution was subsequently obtained by Lin and Wang \cite{LW10}
and Xu and Zhang \cite{XZ12}. Later on, many authors studied the global well-posedness of the system \eqref{eq1.1} with small initial data in various functional
spaces, for instance, the Lebesgue space $L^{3}_{uloc}(\mathbb{R}^{2})$
by Hineman and Wang in \cite{HW13}, the
critical and subcritical Sobolev spaces by Lin and Ding \cite{LD12}, the Besov spaces of either
positive or negative regularity indices by Li and Wang
\cite{LW12} and Hao and Liu \cite{HL14}. In particular, Wang \cite{W11} addressed both local and global well-posedness with rough initial data $(u_{0}, d_{0})\in BMO^{-1}\times BMO$, where $BMO$ is the space of  \textit{Bounded Mean
Oscillation}. We refer the readers to see \cite{DW13}, \cite{DW14}, \cite{L15}, \cite{LX15},  \cite{LZZ15},  \cite{LZ13} and \cite{WX14} for further studies to the system \eqref{eq1.1}.

Note that, when the liquid crystal molecular orientation $d$ equals to a constant unit-vector, the system
\eqref{eq1.1} reduces into the three-dimensional incompressible Navier-Stokes equations:
\begin{equation}\label{eq1.3}
\begin{cases}
  \partial_{t} u+(u\cdot\nabla)u-\nu\Delta
  u+\nabla \Pi=0, \ \ &\text{in}\ \mathbb{R}^{3}\times (0,+\infty),\\
  \nabla\cdot u=0,\ \ &\text{in}\ \mathbb{R}^{3}\times (0,+\infty),\\
  u|_{t=0}=u_0,\ \ &\text{in}\ \mathbb{R}^{3}.
\end{cases}
\end{equation}
In his pioneering work \cite{L34}, Leray proved the global existence of strong solutions in dimension two and in dimension three under the large viscosity $\nu$ or small initial data. However, for general initial data, in dimension three, whether the corresponding local strong solution can be extended to the global one is the challenge open problem.
In the celebrated work, Beale, Kato and Majda \cite{BKM84} showed that, by using the logarithmic Sobolev
inequality, if the smooth solution $u$ blows up at the time
$t=T_{*}$, then
\begin{equation}\label{eq1.4}
  \int_{0}^{T_{*}}\|\omega(\cdot,
  t)\|_{L^{\infty}}\;dt=\infty.
\end{equation}
Here $\omega=\nabla\times u$ is the vorticity. Subsequently, Kozono and Taniuchi \cite{KT00} and Kozono, Ogawa and
Taniuchi \cite{KOT02} refined the BKM's criterion \eqref{eq1.4}
to
\begin{equation}\label{eq1.5}
  \int_{0}^{T_{*}}\|\omega(\cdot, t)\|_{BMO}\;dt=\infty\
  \ \text{and}\ \ \int_{0}^{T_{*}}\|\omega(\cdot, t)\|_{\dot{B}^{0}_{\infty,
  \infty}}\;dt=\infty,
\end{equation}
respectively, where $\dot{B}^{0}_{\infty, \infty}(\mathbb{R}^{3})$ is the homogeneous
Besov space. Recently, Dong and Zhang \cite{DZ10} established the BKM's criterion to the equations \eqref{eq1.3} that if
\begin{equation}\label{eq1.6}
  \int_{0}^{T}\|\nabla_{h}u^{h}(\cdot, t)\|_{\dot{B}^{0}_{\infty,
  \infty}}\;dt<\infty,
\end{equation}
then the strong solution $u$ can be extended beyond the time $T$.

On the other hand, when the velocity field $u$ is identically vanishing, the system
\eqref{eq1.1} becomes to the heat flow of harmonic maps. In
\cite{W08}, Wang established  a Serrin type regularity criterion,
which implies that if the solution $d$ blows up at time $T_{*}$,
then
\begin{equation}\label{eq1.7}
  \sup_{0\leq t< T_{*}}\|\nabla d(\cdot, t)\|_{L^{n}}=\infty.
\end{equation}
Motivated by the conditions \eqref{eq1.4} and \eqref{eq1.7}, Huang
and Wang \cite{HW12} established a BKM type blow-up criterion for
the system \eqref{eq1.1}, more precisely, they characterized the first finite singular
time $T_{*}$ as that if $T_{*}<\infty$, then
\begin{equation}\label{eq1.8}
  \int_{0}^{T_{*}}\big(\|\omega(\cdot, t)\|_{L^{\infty}}+\|\nabla d(\cdot, t)\|_{L^{\infty}}^{2}\big)\;dt=\infty.
\end{equation}
Recently, Liu and the author in this paper improved the condition \eqref{eq1.8} in \cite{LZ14} that the smooth solution $(u,d)$ of
\eqref{eq1.1} blows up at the time $T_{*}$ if and only if
\begin{align}\label{eq1.9}
  \int_{0}^{T_{*}}\frac{\|\omega(\cdot, t)\|_{\dot{B}^{0}_{\infty,\infty}}+\|\nabla d(\cdot, t)\|_{\dot{B}^{0}_{\infty,\infty}}^{2}}
  {\sqrt{1+\text{ln}(e+\|\omega(\cdot, t)\|_{\dot{B}^{0}_{\infty,\infty}}
  +\|\nabla d(\cdot, t)\|_{\dot{B}^{0}_{\infty,\infty}})}}\text{d}t=\infty.
\end{align}

Before we state our main result. we recall that local existence of smooth solutions of the system \eqref{eq1.1} has
been announced in \cite{HW12}. For a given unit vector
$\bar{d}\in\mathbb{S}^{2}$ and $s>0$, we set
\begin{equation*}
  H^{s}_{\bar{d}}(\mathbb{R}^{3}, \mathbb{S}^{2}):=\Big\{d: d-\bar{d}\in H^{s}(\mathbb{R}^{3}, \mathbb{R}^{3}),
  |d|=1\ \text{a.e.}\ \text{in} \ \mathbb{R}^{3}\Big\}.
\end{equation*}
It follows from \cite{HW12} that if the initial velocity
$u_{0}\in H^{s}(\mathbb{R}^{3},\mathbb{R}^{3})$ with
$\nabla\cdot u_{0}=0$ and $d_{0}\in
H^{s+1}_{\bar{d}}(\mathbb{R}^{3}, \mathbb{S}^{2})$  for $s\geq 3$,
then there exists a positive time $T_{*}$ depending only on
$\|(u_{0}, \nabla d_{0})\|_{H^{s}}$ such that the system
\eqref{eq1.1} has a unique smooth solution $(u, d)$ in
$\mathbb{R}^{3}\times[0,T_{*})$ satisfying
\begin{equation}\label{eq1.10}
\begin{cases}
  u\in C([0,T], H^{s}(\mathbb{R}^{3},\mathbb{R}^{3}))\cap
  C^{1}([0,T], H^{s-2}(\mathbb{R}^{3}, \mathbb{R}^{3})), \\
  d\in C([0,T], H^{s+1}_{\bar{d}}(\mathbb{R}^{3},
  \mathbb{S}^{2}))\cap C^{1}([0,T], H^{s-1}_{\bar{d}}(\mathbb{R}^{3},
  \mathbb{S}^{2})).
\end{cases}
\end{equation}
for any $0<T<T_{*}$. In this paper, we aim at extending the BKM's criterion \eqref{eq1.6} for the Navier-Stokes equations \eqref{eq1.3} and  establishing a similar BKM's criterion to the system \eqref{eq1.1} in terms of the horizontal gradient of two horizontal velocity components and the gradient of liquid crystal molecular orientation field in the framework of the homogeneous Besov space.

\begin{theorem}\label{th1.1}
Let $u_{0}\in H^{3}(\mathbb{R}^{3},\mathbb{R}^{3})$ with
$\nabla\cdot u_{0}=0$ and $d_{0}\in
H^{4}_{\bar{d}}(\mathbb{R}^{3},\mathbb{S}^{2})$, and let $T_{*}>0$
be the maximal existence time such that the system \eqref{eq1.1} has
a unique solution $(u,d)$ satisfying \eqref{eq1.10} for any
$0<T<T_{*}$. If $T_{*}<+\infty$, then
\begin{align}\label{eq1.11}
  \int_{0}^{T_{*}}\big(\|\nabla_{h} u^{h}\|_{\dot{B}^{0}_{p,\frac{2p}{3}}}^{q}+\|\nabla d\|_{\dot{B}^{0}_{\infty,\infty}}^{2}\big)dt=\infty\ \ \text{with}
  \ \ \ \frac{3}{p}+\frac{2}{q}=2,\ \ \frac{3}{2}<p\leq\infty,
\end{align}
where $\nabla_{h}=(\partial_{1}, \partial_{2})$ and $u^{h}=(u^{1}, u^{2})$. In particular,
\begin{align*}
  \limsup_{t\uparrow T_{*}}\big(\|\nabla_{h} u^{h}\|_{\dot{B}^{0}_{p,\frac{2p}{3}}}+\|\nabla d\|_{\dot{B}^{0}_{\infty,\infty}}\big)=\infty,\  \ \frac{3}{2}<p\leq\infty.
\end{align*}
\end{theorem}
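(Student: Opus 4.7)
I would argue by contradiction. Suppose $T_{*}<\infty$ while the integral in \eqref{eq1.11} is finite. By the local existence result of \cite{HW12} recalled in \eqref{eq1.10} together with the standard blow-up alternative, it is enough to prove a uniform bound on $\|(u,\nabla d)(t)\|_{H^{3}}$ for $t\in [0,T_{*})$; this contradicts the maximality of $T_{*}$.

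The first step is the basic energy identity, which using $|d|=1$ yields $L^{\infty}_{t}L^{2}_{x}$ control of $u$ and $\nabla d$, together with $L^{2}_{t}\dot H^{1}_{x}$ dissipation. The core of the argument is a second-order estimate: testing the momentum equation with $-\Delta u$ and applying $-\Delta$ to the $d$-equation before testing with $-\Delta d$, then adding, produces
\begin{equation*}
\frac{d}{dt}\bigl(\|\nabla u\|_{L^{2}}^{2}+\|\Delta d\|_{L^{2}}^{2}\bigr)+\|\Delta u\|_{L^{2}}^{2}+\|\nabla\Delta d\|_{L^{2}}^{2}\leq R_{1}+R_{2}+R_{3},
\end{equation*}
where $R_{1}=-\int (u\cdot\nabla)u\cdot\Delta u\,dx$, $R_{2}$ collects the two coupling terms $\int(\Delta d\cdot\nabla d)\cdot\Delta u\,dx$ and $\int\Delta\bigl((u\cdot\nabla)d\bigr)\cdot\Delta d\,dx$, and $R_{3}$ comes from the Ericksen--Leslie nonlinearity $|\nabla d|^{2}d$.

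The crucial step is to estimate $R_{1}$ in terms of $\nabla_{h}u^{h}$ alone. Following the decomposition of Dong--Zhang \cite{DZ10}, rewrite $R_{1}=\sum_{i,j,k}\int\partial_{k}u^{i}\,\partial_{i}u^{j}\,\partial_{k}u^{j}\,dx$ and repeatedly invoke $\partial_{3}u^{3}=-\nabla_{h}\!\cdot u^{h}$ so that every resulting cubic term contains at least one factor of the form $\partial_{\alpha}u^{\beta}$ with $\alpha,\beta\in\{1,2\}$. Using a paraproduct decomposition and the Bernstein inequality, such a trilinear form is then controlled by $\|\nabla_{h}u^{h}\|_{\dot{B}^{0}_{p,2p/3}}\,\|\nabla u\|_{L^{2}}^{2-3/p}\,\|\Delta u\|_{L^{2}}^{3/p}$, and Young's inequality absorbs $\|\Delta u\|_{L^{2}}^{2}$ into the dissipation, leaving a factor of $\|\nabla_{h}u^{h}\|_{\dot{B}^{0}_{p,2p/3}}^{q}$ with the exponent $q$ dictated precisely by $3/p+2/q=2$. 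The terms in $R_{2}$ and $R_{3}$ are handled analogously, interpolating $\|\nabla d\|_{L^{r}}$ between $\|\nabla d\|_{L^{2}}$, $\|\Delta d\|_{L^{2}}$, $\|\nabla\Delta d\|_{L^{2}}$ and invoking $\|\nabla d\|_{\dot{B}^{0}_{\infty,\infty}}^{2}$; the dissipation $\|\nabla\Delta d\|_{L^{2}}^{2}$ is again partially absorbed. Gronwall's lemma then provides a uniform $H^{2}$-type bound on $[0,T_{*})$, after which propagation to the full $H^{3}$ bound is a routine Sobolev energy argument.

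The principal obstacle is to produce the precise second index $2p/3$ in $\dot{B}^{0}_{p,2p/3}$, rather than the coarser $\dot{B}^{0}_{p,\infty}$. This requires carrying out the trilinear estimate block-by-block in the Littlewood--Paley decomposition and summing in the frequency variable via H\"older in $\ell^{2p/3}$ (so that the $\ell^{2p/3}$-summability of the low-frequency factor matches the interpolation exponent $3/p$), instead of using crude suprema over dyadic scales. A secondary difficulty is that $L^{\infty}$ information on $\nabla d$ is unavailable, so $R_{3}$ must be handled through the weaker $\dot{B}^{0}_{\infty,\infty}$ norm paired with a logarithmic Brezis--Gallouet--Wainger type inequality, in the spirit of the treatment leading to \eqref{eq1.9} in \cite{LZ14}.
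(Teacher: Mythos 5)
Your overall skeleton matches the paper's: argue by contradiction under \eqref{eq3.1}, run an energy estimate at the level of $\|\nabla u\|_{L^{2}}^{2}+\|\Delta d\|_{L^{2}}^{2}$ (the paper works with the vorticity $\omega$, which is equivalent), reduce the convective term to a trilinear form containing a factor of $\nabla_{h}u^{h}$ via $\partial_{3}u^{3}=-\nabla_{h}\cdot u^{h}$ and $\omega_{3}=\partial_{1}u^{2}-\partial_{2}u^{1}$ in the spirit of Dong--Zhang, treat the coupling terms with the Meyer--G\'erard--Oru interpolation, and close with Gronwall before upgrading to higher norms (the paper actually stops at $\int_{0}^{T_{*}}(\|\omega\|_{L^{\infty}}+\|\nabla d\|_{L^{\infty}}^{2})dt<\infty$ and invokes the Huang--Wang criterion \eqref{eq1.8} rather than propagating $H^{3}$ directly, but that difference is cosmetic).

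There is, however, a genuine gap at the central step. Your claimed bound $R_{1}\lesssim\|\nabla_{h}u^{h}\|_{\dot{B}^{0}_{p,2p/3}}\|\nabla u\|_{L^{2}}^{2-3/p}\|\Delta u\|_{L^{2}}^{3/p}$ cannot be obtained the way you describe. It would follow from H\"older if $\dot{B}^{0}_{p,2p/3}\hookrightarrow L^{p}$, but that embedding requires the third index to be at most $\min(p,2)$, and $2p/3>2$ as soon as $p>3$ (and the theorem must cover $p=\infty$). Likewise, summing the blocks $\sum_{j\in\mathbb{Z}}\|\Delta_{j}\nabla_{h}u^{h}\|_{L^{p}}$ by H\"older in $\ell^{2p/3}$ against the constant sequence fails because $\sum_{j\in\mathbb{Z}}1^{(2p/3)'}$ diverges; the $\omega$-factors carry no frequency decay to pay for the sum. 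The paper's fix is a three-zone decomposition $j<-N$, $|j|\leq N$, $j>N$: the middle zone contributes $(2N+1)^{(2p-3)/(2p)}\|\nabla_{h}u^{h}\|_{\dot{B}^{0}_{p,2p/3}}\|\omega\|_{L^{2p'}}^{2}$, the tails are controlled by Bernstein as $2^{-\frac{3}{2}N}\|\omega\|_{L^{2}}^{3}$ and $2^{-\frac{N}{2}}\|\omega\|_{L^{2}}\|\nabla\omega\|_{L^{2}}^{2}$, and one must take $N\sim\ln(e+\|\omega\|_{L^{2}}^{2})$ to absorb the high-frequency piece into the dissipation. This leaves a factor $\ln(e+H)$ with $H=\|\omega\|_{L^{2}}^{2}+\|\Delta d\|_{L^{2}}^{2}$ on the right-hand side, so the estimate closes only through a logarithmic Gronwall inequality for $\ln(e+H)$ and yields a double-exponential bound \eqref{eq3.24}; your differential inequality as stated does not close without this. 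Relatedly, you located the logarithmic difficulty in the $d$-terms, where in fact none arises: the Meyer--G\'erard--Oru inequality \eqref{eq2.3} with $f=\Delta d$ (or $f=\nabla^{2}d$) gives $\|\Delta d\|_{L^{4}}^{2}\leq C\|\nabla d\|_{\dot{B}^{0}_{\infty,\infty}}\|\nabla\Delta d\|_{L^{2}}$ directly, which is all the paper needs for $I_{2}$, $J_{1}$ and $J_{2}$; no Brezis--Gallouet--Wainger argument is required there.
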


\noindent\textbf{Remark 1.1} It is clearly that the blow-up criterion \eqref{eq1.11} is
an improvement of \eqref{eq1.8} in \cite{HW12} due to the embedding $L^{\infty}(\mathbb{R}^{3})\hookrightarrow \dot{B}^{0}_{\infty,\infty}(\mathbb{R}^{3})$ and the fact $\|\omega\|_{\dot{B}^{0}_{\infty,\infty}}\cong\|\nabla u\|_{\dot{B}^{0}_{\infty,\infty}}$ .

\medskip

As a byproduct of Theorem \ref{th1.1}, we obtain a corresponding BKM's blow-up criterion for the three-dimensional Navier-Stokes equations.
\begin{corollary}\label{co1.2}
Let $u_{0}\in H^{1}(\mathbb{R}^{3},\mathbb{R}^{3})$ with
$\nabla\cdot u_{0}=0$, and let $T_{*}>0$
be the maximal existence time such that the system \eqref{eq1.3} has
a unique solution $u$ satisfying $u\in C([0,T], H^{1}(\mathbb{R}^{3}))$ for any
$0<T<T_{*}$. If $T_{*}<+\infty$, then
\begin{align}\label{eq1.12}
   \int_{0}^{T_{*}}\|\nabla_{h} u^{h}\|_{\dot{B}^{0}_{p,\frac{2p}{3}}}^{q}dt=\infty\ \ \text{with}
  \ \ \ \frac{3}{p}+\frac{2}{q}=2,\ \ \frac{3}{2}<p\leq\infty.
\end{align}
In particular,
\begin{align*}
  \limsup_{t\uparrow T_{*}}\|\nabla_{h} u^{h}\|_{\dot{B}^{0}_{p,\frac{2p}{3}}}=\infty,\  \ \frac{3}{2}<p\leq\infty.
\end{align*}
\end{corollary}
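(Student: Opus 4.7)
The plan is to deduce Corollary \ref{co1.2} from Theorem \ref{th1.1} by reducing the Navier--Stokes system \eqref{eq1.3} to the liquid crystal system \eqref{eq1.1} with a constant director field, and by bridging the gap between the $H^1$ regularity assumed in the corollary and the $H^3$ regularity required by Theorem \ref{th1.1} via the instantaneous smoothing of $H^1$-strong Navier--Stokes solutions.

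Fix an arbitrary $t_0 \in (0, T_*)$. By the classical parabolic regularization for $H^1$-strong solutions of \eqref{eq1.3}, one has $u(\cdot, t_0) \in H^s(\mathbb{R}^3)$ for every $s \geq 0$; in particular $u(\cdot, t_0) \in H^3$. Pick any fixed unit vector $\bar{d} \in \mathbb{S}^{2}$, so that trivially $\bar{d} \in H^4_{\bar{d}}(\mathbb{R}^3, \mathbb{S}^2)$. Apply the local existence theory quoted in \eqref{eq1.10} to \eqref{eq1.1} with the shifted initial data $(u(\cdot, t_0), \bar{d})$ at time $t_0$. Since $\nabla \bar{d} \equiv 0$, the director equation is satisfied by $d \equiv \bar{d}$ for all time; by uniqueness in the smooth class, the resulting liquid crystal solution is exactly $(u, \bar{d})$, where $u$ denotes the Navier--Stokes continuation of $u(\cdot, t_0)$. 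Uniqueness of the $H^1$-strong Navier--Stokes solution then identifies this velocity with the one in the corollary on the common interval, so the maximal existence time of the smooth liquid crystal reduction equals $T_* - t_0 < \infty$.

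Consequently, Theorem \ref{th1.1} applied to $(u, \bar{d})$ with time origin at $t_0$, combined with $\|\nabla \bar{d}\|_{\dot{B}^0_{\infty, \infty}} = 0$, yields
\begin{equation*}
  \int_{t_0}^{T_*} \|\nabla_h u^h(\cdot, t)\|_{\dot{B}^0_{p, \frac{2p}{3}}}^{q} \, dt = \infty.
\end{equation*}
Non-negativity of the integrand immediately gives \eqref{eq1.12}. The \emph{limsup} assertion follows because for $\frac{3}{2} < p \leq \infty$ one has $q < \infty$, so a bounded integrand on the finite interval $[0, T_*]$ cannot yield an infinite integral.

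The main obstacle is the regularization-and-uniqueness bookkeeping: one must verify that the $H^1$-maximal existence time for \eqref{eq1.3} coincides with the maximal existence time of the $H^3$-smooth liquid crystal reduction starting at $t_0$. This relies on (i) the instantaneous gain of arbitrary Sobolev regularity from $H^1$ data (a classical property of the Navier--Stokes semigroup), and (ii) uniqueness of the $H^1$-strong Navier--Stokes solution to identify the two flows on their common existence interval. An alternative, more self-contained route would be to mimic the proof of Theorem \ref{th1.1} directly at the $H^1$ energy level by testing \eqref{eq1.3} against $-\Delta u$ and exploiting $\partial_3 u^3 = -\partial_1 u^1 - \partial_2 u^2$ to reduce the nonlinear term to quantities controlled by $\|\nabla_h u^h\|_{\dot{B}^0_{p, \frac{2p}{3}}}$.
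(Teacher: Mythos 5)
Your proof is correct and follows the route the paper intends: Corollary \ref{co1.2} is presented as a byproduct of Theorem \ref{th1.1}, obtained precisely by taking $d\equiv\bar d$ constant, which annihilates the director terms and reduces \eqref{eq1.1} to \eqref{eq1.3} so that \eqref{eq1.11} collapses to \eqref{eq1.12}. The only content the paper leaves implicit --- and which you supply correctly --- is the bookkeeping that lifts the $H^{1}$ data to the $H^{3}\times H^{4}_{\bar d}$ setting of Theorem \ref{th1.1} via instantaneous parabolic smoothing at some $t_{0}>0$ and identifies the two maximal existence times through uniqueness of the strong solution.
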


\noindent\textbf{Remark 1.2} When $p=\infty$, the condition \eqref{eq1.12} becomes $\int_{0}^{T_{*}}\|\nabla_{h} u^{h}\|_{\dot{B}^{0}_{\infty,\infty}}dt=\infty$, thus Corollary
\ref{co1.2} can be regarded as an extension of the blow up criterion \eqref{eq1.6}
in \cite{DZ10}.

The plan of the paper is arranged as follows. In Section 2, we recall the
Littlewood-Paley dyadic decomposition theory, the definition of
the homogeneous Besov spaces, and then state some important inequalities. In Section
3, we present the proof of Theorem \ref{th1.1}. Throughout the
paper, $C$ denotes a constant and may change from line to line;
$\|\cdot\|_{X}$ stands for the norm of the Banach space $X$, and we say that a vector $u=(u^{1}, u^{2}, u^{3})$ belongs to a
function space $X$ if $u^{j}\in X$ holds for every $j=1,2,3$ and we
put $\|u\|_{X}\overset{\operatorname{def}}{=}\underset{1\leq j\leq3}{\max}\|u^{j}\|_{X}$.

\section{Preliminaries}

In this section we shall recall some preliminaries on the
Littlewood-Paley decomposition theory and the definition of the homogeneous Besov spaces, and state some important inequalities in these
functional spaces. For the details, see \cite{BCD11} and \cite{T83}.

We first introduce the Littlewood-Paley dyadic decomposition theory. Let
$\mathcal{S}(\mathbb{R}^{3})$ be the Schwartz class of rapidly
decreasing function and $\mathcal{S}'(\mathbb{R}^{3})$ be its dual.
Given $f\in\mathcal{S}(\mathbb{R}^{3})$, its Fourier transformation
$\mathcal{F}(f)$ or $\widehat{f}$ is
defined by
$$
  \mathcal{F}(f)(\xi)=\widehat{f}(\xi)\overset{\operatorname{def}}{=}\frac{1}{(2\pi)^{\frac{3}{2}}}\int_{\mathbb{R}^{3}}f(x)e^{-ix\cdot\xi}dx.
$$
Let $\mathcal{B}=\{\xi\in\mathbb{R}^{3},\ |\xi|\leq\frac{4}{3}\}$
and $\mathcal{C}=\{\xi\in\mathbb{R}^{3},\ \frac{3}{4}\leq
|\xi|\leq\frac{8}{3}\}$. Choose two nonnegative smooth radial
functions $\chi, \varphi\in\mathcal{S}(\mathbb{R}^{3})$, valued in the interval $[0,1]$, and
supported on $\mathcal{B}$ and $\mathcal{C}$,  respectively, such that
\begin{align*}
  \chi(\xi)+\sum_{j\geq0}\varphi(2^{-j}\xi)=1, \ \ \xi\in\mathbb{R}^{3},\\
  \sum_{j\in\mathbb{Z}}\varphi(2^{-j}\xi)=1, \ \
  \xi\in\mathbb{R}^{3}\backslash\{0\}.
\end{align*}
Writing $\varphi_{j}(\xi)=\varphi(2^{-j}\xi)$. Then it is easy to
verify that
$\operatorname{supp}\varphi_{j}\cap\operatorname{supp}\varphi_{k}=\varnothing$
if $|j-k|\geq2$. Let $h=\mathcal{F}^{-1}\varphi$ and
$\tilde{h}=\mathcal{F}^{-1}\chi$, where $\mathcal{F}^{-1}$ is the
inverse Fourier transform, i.e.,
$$
  \mathcal{F}^{-1}(f)(x)\overset{\operatorname{def}}{=}\frac{1}{(2\pi)^{\frac{3}{2}}}\int_{\mathbb{R}^{3}}f(\xi)e^{i\xi\cdot x}d\xi.
$$
The homogeneous dyadic blocks
$\Delta_{j}$ and $S_{j}$ are defined for all $j\in \mathbb{Z}$ by
\begin{align*}
  \Delta_{j}f\overset{\operatorname{def}}{=}\varphi(2^{-j}\mathrm{D})f=2^{3j}\int_{\mathbb{R}^{3}}h(2^{j}y)f(x-y)dy,\\
  S_{j}f\overset{\operatorname{def}}{=}\chi(2^{-j}\mathrm{D})f=2^{3j}\int_{\mathbb{R}^{3}}\tilde{h}(2^{j}y)f(x-y)dy.
\end{align*}
Here $\mathrm{D}=(\mathrm{D}_1, \mathrm{D}_2,\mathrm{D}_3)$ and
$\mathrm{D}_j=i^{-1}\partial_{x_j}$ ($i^{2}=-1$, $j=1,2,3$). By telescoping the series, we have the following (formal) Littlewood-Paley decomposition
\begin{equation*}
  f=\sum_{j\in\mathbb{Z}}\Delta_{j}f \ \ \text{for}\ \
  f\in\mathcal{S}_{h}'(\mathbb{R}^{3}),
\end{equation*}
where $\mathcal{S}_{h}'(\mathbb{R}^{3})$ is the space of temperate
distributions $f$ such that
\begin{equation*}
\lim_{j\rightarrow-\infty}S_{j}f=0\ \ \text{in}\ \
\mathcal{S}'(\mathbb{R}^{3}).
\end{equation*}
We remark here that $\mathcal{S}_{h}'(\mathbb{R}^{3})$ can be
identified by the quotient space of
$\mathcal{S}'(\mathbb{R}^{3})/\mathcal{P}(\mathbb{R}^{3})$ with the
polynomial space $\mathcal{P}(\mathbb{R}^{3})$.

\medskip

Now let us recall
the definition of the homogeneous Besov spaces.

\begin{definition}\label{de2.1}
Let $s\in \mathbb{R}$, $p,r\in[1,\infty]$. The homogeneous Besov
space $\dot{B}^{s}_{p,r}(\mathbb{R}^{3})$ is defined by
\begin{equation*}
  \dot{B}^{s}_{p,r}(\mathbb{R}^{3})\overset{\operatorname{def}}{=}\Big\{f\in \mathcal{S}'_{h}(\mathbb{R}^{3}):\ \
  \|f\|_{\dot{B}^{s}_{p,r}}<\infty\Big\},
\end{equation*}
where
\begin{equation*}
  \|f\|_{\dot{B}^{s}_{p,r}}\overset{\operatorname{def}}{=} \begin{cases} \big(\sum_{j\in\mathbb{Z}}2^{jsr}\|\Delta_{j}f\|_{L^{p}}^{r}\big)^{\frac{1}{r}}
  \ \ &\text{for}\ \ 1\leq r<\infty,\\
  \sup_{j\in\mathbb{Z}}2^{js}\|\Delta_{j}f\|_{L^{p}}\ \
  &\text{for}\ \
  r=\infty.
 \end{cases}
\end{equation*}
\end{definition}

We emphasize here that the homogeneous Hilbert space
$\dot{H}^{s}(\mathbb{R}^{3})$ can be identified with the homogeneous Besov space $\dot{B}^{s}_{2,2}(\mathbb{R}^{3})$,
which is endowed with the equivalent norm
$\|f\|_{\dot{H}^{s}}=\|(-\Delta)^{\frac{s}{2}}f\|_{L^{2}}$.

\medskip

Next, we state two key lemmas used in the proof of Theorem \ref{th1.1}. The first one is the classical Bernstein inequalities, which can be easily derived from the Young inequality.

\begin{lemma}\label{le2.2} {\em (\cite{BCD11})}
For any nonnegative integer $k$ and any
couple of real numbers $(p,q)$ with $1\leq p\le q\leq \infty$, we
have
\begin{equation}\label{eq2.1}
   \sup_{|\alpha|=k}\|\partial^{\alpha}\Delta_{j}f\|_{L^{q}}\leq
   C2^{jk+3j(\frac{1}{p}-\frac{1}{q})}\|\Delta_{j}f\|_{L^{p}},
\end{equation}
where $C$ is a positive constant independent of $f$ and $j$.
\end{lemma}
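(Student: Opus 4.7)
The plan is to exploit the spectral localization of $\Delta_j f$ to convert the derivative on the left-hand side into a rescaled convolution, and then apply Young's inequality. Concretely, since $\widehat{\Delta_j f}$ is supported in the annulus $2^j\mathcal{C}=\{\xi:\,2^j\cdot 3/4\leq|\xi|\leq 2^j\cdot 8/3\}$, I would first pick an auxiliary radial bump $\tilde{\varphi}\in C_c^{\infty}(\mathbb{R}^{3})$ that equals $1$ on $\mathcal{C}$ and is supported in a slightly larger annulus; this yields the reproducing identity $\Delta_j f=\tilde{\varphi}(2^{-j}\mathrm{D})\Delta_j f$.

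Next I would encode the derivatives into the symbol. Setting $g_{\alpha}(\xi)\overset{\operatorname{def}}{=}(i\xi)^{\alpha}\tilde{\varphi}(\xi)$ and using the scaling $\xi=2^{j}\eta$, I obtain
\begin{equation*}
\partial^{\alpha}\Delta_{j}f = 2^{jk}\,\bigl(g_{\alpha}(2^{-j}\mathrm{D})\Delta_{j}f\bigr) = 2^{jk}\,(h_{\alpha,j}\ast \Delta_{j}f),
\end{equation*}
where $h_{\alpha,j}(x)=2^{3j}\check{g}_{\alpha}(2^{j}x)$ and $\check{g}_{\alpha}=\mathcal{F}^{-1}g_{\alpha}$. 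Since $g_{\alpha}\in C_c^{\infty}(\mathbb{R}^{3})$, its inverse Fourier transform $\check{g}_{\alpha}$ is a Schwartz function, hence belongs to $L^{r}(\mathbb{R}^{3})$ for every $r\in[1,\infty]$, with $\|\check{g}_{\alpha}\|_{L^{r}}$ bounded uniformly for $|\alpha|\leq k$ (the bound depends only on $k$ and $\tilde{\varphi}$).

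Then I would apply Young's convolution inequality with exponent $r$ satisfying $1+\tfrac{1}{q}=\tfrac{1}{r}+\tfrac{1}{p}$; the assumption $p\leq q$ ensures $r\in[1,\infty]$, so Young is legitimate. A direct change of variables gives the scaling
\begin{equation*}
\|h_{\alpha,j}\|_{L^{r}} = 2^{3j(1-1/r)}\|\check{g}_{\alpha}\|_{L^{r}} = C\,2^{3j(\frac{1}{p}-\frac{1}{q})}.
\end{equation*}
Combining these, $\|\partial^{\alpha}\Delta_{j}f\|_{L^{q}}\leq C\,2^{jk+3j(\frac{1}{p}-\frac{1}{q})}\|\Delta_{j}f\|_{L^{p}}$, and taking the supremum over $|\alpha|=k$ yields \eqref{eq2.1}.

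There is no real obstacle here; the only care points are the construction of $\tilde{\varphi}$ (which is standard because $\mathcal{C}$ is a compact annulus away from the origin) and the verification of the exponent arithmetic $1-\tfrac{1}{r}=\tfrac{1}{p}-\tfrac{1}{q}$ in the scaling step. Everything else reduces to Young's inequality applied to a single rescaled convolution kernel, as the lemma itself indicates.
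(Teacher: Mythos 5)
Your proof is correct and is precisely the standard argument the paper alludes to when it says the Bernstein inequalities ``can be easily derived from the Young inequality'': reproduce $\Delta_j f$ with an auxiliary annulus bump, absorb the derivatives into a rescaled Schwartz kernel, and apply Young's convolution inequality with $1+\tfrac1q=\tfrac1r+\tfrac1p$. The paper itself only cites \cite{BCD11} for this lemma, so there is nothing further to compare.
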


The second one is an interpolation inequality due to Meyer, Gerard
and Oru \cite{MGO96}.

\begin{lemma}\label{le2.3}{\em (\cite{MGO96})}
Let $1<p<q<\infty$, and let $s=\beta(\frac{q}{p}-1)>0$. Then there
exists a constant $C$ depending only on $\beta$, $p$ and $q$
such that the estimate
\begin{equation}\label{eq2.2}
  \|f\|_{L^{q}}\leq
  C\|f\|_{\dot{B}^{-\beta}_{\infty,\infty}}^{1-\frac{p}{q}}\|(-\Delta)^{\frac{s}{2}}f\|_{L^{p}}^{\frac{p}{q}}
\end{equation}
holds for all $f\in
\dot{H}^{s}_{p}(\mathbb{R}^{3})\cap\dot{B}^{-\beta}_{\infty,\infty}(\mathbb{R}^{3})$.
\end{lemma}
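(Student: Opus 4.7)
The plan is to argue by contradiction: assume
$$\int_0^{T_*} \bigl(\|\nabla_h u^h\|_{\dot{B}^0_{p, 2p/3}}^q + \|\nabla d\|_{\dot{B}^0_{\infty,\infty}}^2\bigr)\,dt < \infty,$$
and then establish a uniform bound on $\|u(t)\|_{H^3} + \|\nabla d(t)\|_{H^3}$ over $[0, T_*)$. Combined with the local existence statement recalled in \eqref{eq1.10}, this forces $T_*$ not to be maximal, a contradiction. As a first step I record the standard basic energy identity: testing $\eqref{eq1.1}_1$ with $u$ and $\eqref{eq1.1}_3$ with $-(\Delta d + |\nabla d|^2 d)$, using $|d|\equiv 1$ and the identity \eqref{eq1.2} to cancel the coupling terms, yields $L^\infty_t L^2_x$ control of $(u, \nabla d)$ together with $L^2_t L^2_x$ control of $\nabla u$ and of $\Delta d + |\nabla d|^2 d$ on $[0, T_*)$.

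The central step is the $H^1 \times H^2$ energy estimate. Testing $\eqref{eq1.1}_1$ against $-\Delta u$ and $\eqref{eq1.1}_3$ against $\Delta^2 d$ and summing, one obtains
$$\frac{d}{dt}\bigl(\|\nabla u\|_{L^2}^2 + \|\Delta d\|_{L^2}^2\bigr) + 2\bigl(\|\Delta u\|_{L^2}^2 + \|\nabla \Delta d\|_{L^2}^2\bigr) = \sum_k I_k,$$
where the $I_k$ come from the convective term, the $\nabla\cdot(\nabla d\otimes\nabla d)$-coupling, the transport term in $d$, and the quadratic $|\nabla d|^2 d$ term. The critical obstacle is the convective piece $\int (u\cdot\nabla)u \cdot \Delta u$. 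Its horizontal part $\int u^h \cdot \nabla_h u \cdot \Delta u$ can be controlled directly by $\|\nabla_h u^h\|_{L^\sigma}$ for a suitable $\sigma$; the vertical part $\int u^3\, \partial_3 u\cdot \Delta u$ must be handled by invoking the divergence-free condition $\partial_3 u^3 = -\nabla_h\cdot u^h$, writing $u^3(x_h, x_3) = -\int_0^{x_3} \nabla_h\cdot u^h\,dx_3'$ and integrating by parts in $x_3$ to convert the vertical convection into expressions featuring only $\nabla_h u^h$ together with $u$ and $\nabla u$, following the scheme of \cite{DZ10}.

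To bound the resulting $L^\sigma$-norm of $\nabla_h u^h$, I apply the Meyer--Gerard--Oru interpolation (Lemma \ref{le2.3}) with $\beta = 3/p$, combined with the standard Besov embedding $\dot{B}^0_{p, 2p/3}(\mathbb{R}^3) \hookrightarrow \dot{B}^{-3/p}_{\infty,\infty}(\mathbb{R}^3)$, to produce an inequality of the form
$$\|\nabla_h u^h\|_{L^\sigma} \le C\,\|\nabla_h u^h\|_{\dot{B}^0_{p,\,2p/3}}^{\theta}\,\|\nabla^2 u\|_{L^2}^{1-\theta},$$
with $\theta$ selected so that the scaling relation $3/p + 2/q = 2$ makes $q = 2/\theta$ the admissible time-integration exponent. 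The coupling terms involving $\nabla d$ are handled analogously by Lemma \ref{le2.3} applied with $\beta = 0$, extracting the $\|\nabla d\|_{\dot{B}^0_{\infty,\infty}}$ norm. After Young's inequality absorbs the dissipative pieces $\|\Delta u\|_{L^2}^2$ and $\|\nabla\Delta d\|_{L^2}^2$ into the left side, one arrives at
$$\frac{d}{dt}\bigl(\|\nabla u\|_{L^2}^2 + \|\Delta d\|_{L^2}^2\bigr) \le C\bigl(1 + \|\nabla_h u^h\|_{\dot{B}^0_{p,\,2p/3}}^q + \|\nabla d\|_{\dot{B}^0_{\infty,\infty}}^2\bigr)\bigl(1 + \|\nabla u\|_{L^2}^2 + \|\Delta d\|_{L^2}^2\bigr).$$
The contradiction hypothesis then gives, via Grönwall's lemma, uniform $H^1 \times H^2$ control of $(u,d)$ on $[0, T_*)$.

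Finally, I would iterate the same scheme at higher regularity: with the $H^1\times H^2$ bound in hand, one performs parallel $H^2 \times H^3$ and then $H^3 \times H^4$ estimates, where the lower-order quantities act as already-controlled coefficients and the same anisotropic split / Lemma \ref{le2.3} interpolation closes the Grönwall argument. This produces the desired $H^3 \times H^4$ bound on $[0, T_*)$, contradicting the maximality of $T_*$ in \eqref{eq1.10} and proving \eqref{eq1.11}. The $\limsup$ statement follows from \eqref{eq1.11} by a standard contradiction argument using Hölder in time.
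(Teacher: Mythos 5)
Your proposal does not address the statement you were asked to prove. The statement is Lemma \ref{le2.3}, the Meyer--Gerard--Oru interpolation inequality
\begin{equation*}
  \|f\|_{L^{q}}\leq C\|f\|_{\dot{B}^{-\beta}_{\infty,\infty}}^{1-\frac{p}{q}}\|(-\Delta)^{\frac{s}{2}}f\|_{L^{p}}^{\frac{p}{q}},
  \qquad s=\beta\Big(\frac{q}{p}-1\Big)>0,
\end{equation*}
which is a self-contained harmonic-analysis fact about a single function $f$ on $\mathbb{R}^{3}$, quoted in the paper from \cite{MGO96}. What you have written instead is a strategy for proving Theorem \ref{th1.1} (the blow-up criterion): a contradiction argument, energy estimates, the anisotropic treatment of the convective term, and a Gr\"onwall closure. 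Nothing in your text establishes, or even engages with, the inequality \eqref{eq2.2}. Worse, your argument explicitly \emph{invokes} Lemma \ref{le2.3} as a tool (``I apply the Meyer--Gerard--Oru interpolation (Lemma \ref{le2.3}) with $\beta=3/p$\dots''), so as a purported proof of that lemma it is circular.

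If you do want to prove Lemma \ref{le2.3}, the standard route is via the Littlewood--Paley decomposition: for a cutoff frequency $N$ write $f=S_{N}f+(f-S_{N}f)$, bound the low-frequency part pointwise by $|S_{N}f|\leq C\sum_{j<N}2^{j\beta}\cdot 2^{-j\beta}\|\Delta_{j}f\|_{L^{\infty}}\leq C2^{N\beta}\|f\|_{\dot{B}^{-\beta}_{\infty,\infty}}$, estimate the measure of the level set $\{|f|>\lambda\}$ by choosing $N=N(\lambda)$ so that $C2^{N\beta}\|f\|_{\dot{B}^{-\beta}_{\infty,\infty}}\leq\lambda/2$, control the high-frequency remainder through $\|f-S_{N}f\|_{L^{p}}\leq C2^{-Ns}\|(-\Delta)^{\frac{s}{2}}f\|_{L^{p}}$ together with Chebyshev's inequality, and then integrate $\lambda^{q-1}|\{|f|>\lambda\}|$ in $\lambda$; the relation $s=\beta(\frac{q}{p}-1)$ is exactly what makes the resulting integral converge and produce the stated exponents $1-\frac{p}{q}$ and $\frac{p}{q}$. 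None of these steps appears in your submission, so the proof of the lemma is entirely missing.
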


In fact, we shall use the following particular form
of \eqref{eq2.2} by taking $s=\beta=1$, $p=2$ and $q=4$:
\begin{equation}\label{eq2.3}
  \|f\|_{L^{4}}\leq
  C\|f\|_{\dot{B}^{-1}_{\infty,\infty}}^{\frac{1}{2}}\|(-\Delta)^{\frac{1}{2}}f\|_{L^{2}}^{\frac{1}{2}},
\end{equation}
which the proof can be found in \cite{GG11}.

\section{The proof of Theorem \ref{th1.1}}

We prove Theorem \ref{th1.1} by contradiction.  Assume that \eqref{eq1.11} were not true. Then there exists a positive constant
$M_{0}$ such that
\begin{align}\label{eq3.1}
  \int_{0}^{T_{*}}\big(\|\nabla_{h} u^{h}(\cdot, t)\|_{\dot{B}^{0}_{p,\frac{2p}{3}}}^{q}+\|\nabla d(\cdot, t)\|_{\dot{B}^{0}_{\infty,\infty}}^{2}\big)dt\leq M_{0}.
\end{align}
Based on the BKM's criterion \eqref{eq1.8} in \cite{HW12}, it suffices to show that if under the condition  \eqref{eq3.1}, there exists a constant $C$ such that
\begin{equation}\label{eq3.2}
  \int_{0}^{T_{*}}\big(\|\omega(\cdot, t)\|_{L^{\infty}}+\|\nabla d(\cdot, t)\|_{L^{\infty}}^{2}\big)\;dt\leq C,
\end{equation}
which is enough to guarantee the extension of
strong solution ($u$, $d$) beyond the time $ T_{*}$ by \cite{HW12}. That is to say,
$[0,T_{*})$ is not a maximal existence interval, we get the desired
contradiction.

\medskip

In order to prove \eqref{eq3.2}, we first aim at establishing $L^{2}$
estimates for the vorticity $\omega$ and $\Delta d$.

\begin{lemma}\label{le3.1}
Assume that $u_{0}\in H^{3}(\mathbb{R}^{3},\mathbb{R}^{3})$ with $\nabla\cdot u_{0}=0$, $d_{0}\in
H^{4}_{\bar{d}}(\mathbb{R}^{3},\mathbb{S}^{2})$. Let $(u,d)$ be the
corresponding local smooth solution to the system
\eqref{eq1.1} on $[0,T)$ for some $0<T<\infty$. If
\begin{align}\label{eq3.3}
 \int_{0}^{T}\big(\|\nabla_{h} u^{h}(\cdot, t)\|_{\dot{B}^{0}_{p,\frac{2p}{3}}}^{q}+\|\nabla d(\cdot, t)\|_{\dot{B}^{0}_{\infty,\infty}}^{2}\big)dt=M_{0}<\infty,
\end{align}
then we have
\begin{align}\label{eq3.4}
   \sup_{0\leq t\leq T}(\|\omega(\cdot,t)\|_{L^{2}}^{2}&+\|\Delta d(\cdot,t)\|_{L^{2}}^{2})+\int_{0}^{T}\big(\|\nabla
  \omega(\cdot,t)\|_{L^{2}}^{2}+\|\nabla\Delta d(\cdot,t)\|_{L^{2}}^{2}\big)dt\leq C_{0},
\end{align}
where $C_{0}$ is a constant depending only on $\|u_{0}\|_{H^{1}}$, $\|d_{0}\|_{H^{2}}$, $T$ and $M_{0}$.
\end{lemma}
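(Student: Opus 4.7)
The plan is to derive a combined $H^{1}$-type differential inequality for $\nabla u$ and $\Delta d$, exploit a cancellation between the liquid--crystal coupling terms, bound the remaining nonlinearities by the two parts of \eqref{eq3.3}, and close by Gr\"onwall.

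\textbf{Setting up the identity.} First absorb $\nabla(|\nabla d|^{2}/2)$ into a modified pressure, so the velocity equation reads $\partial_{t}u+(u\cdot\nabla)u-\Delta u+\nabla\widetilde{\Pi}=-\Delta d\cdot\nabla d$. Test this with $-\Delta u$; simultaneously apply $\Delta$ to the third equation of \eqref{eq1.1} and test with $\Delta d$; then add. Incompressibility kills $\int(u\cdot\nabla)\Delta d\cdot\Delta d$, and the two coupling contributions $\int(\Delta d\cdot\nabla d)\cdot\Delta u\,dx$ and $-\int \Delta u^{i}\,\partial_{i}d\cdot\Delta d\,dx$ cancel because the $j$-th component of $\Delta d\cdot\nabla d$ is $\partial_{j}d\cdot\Delta d$. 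One arrives at
\begin{align*}
\tfrac{1}{2}\tfrac{d}{dt}\bigl(\|\nabla u\|_{L^{2}}^{2}+\|\Delta d\|_{L^{2}}^{2}\bigr)+\|\Delta u\|_{L^{2}}^{2}+\|\nabla\Delta d\|_{L^{2}}^{2}=I_{1}+I_{2}+I_{3},
\end{align*}
with $I_{1}=-\int\partial_{k}u^{i}\,\partial_{i}u^{j}\,\partial_{k}u^{j}$, $I_{2}=-2\int\partial_{k}u^{i}\,\partial_{k}\partial_{i}d\cdot\Delta d$, and $I_{3}=\int\Delta(|\nabla d|^{2}d)\cdot\Delta d$. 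Since $\|\omega\|_{L^{2}}=\|\nabla u\|_{L^{2}}$ and $\|\nabla\omega\|_{L^{2}}=\|\Delta u\|_{L^{2}}$ for divergence-free $u$, proving \eqref{eq3.4} reduces to bounding $I_{1},I_{2},I_{3}$.

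\textbf{Bounding $I_{1}$.} This is the main obstacle. Following the spirit of \cite{DZ10}, decompose $\nabla u$ into $\nabla_{h}u^{h}$ and the other entries, and use $\partial_{3}u^{3}=-\nabla_{h}\cdot u^{h}$ to eliminate the single ``bad'' vertical derivative. A Littlewood--Paley/paraproduct expansion of $I_{1}$, together with Bernstein's inequality (Lemma \ref{le2.2}) to pass between $L^{p}$ and $L^{2}$ on each dyadic block, and H\"older in both spatial and dyadic variables with exponents dictated by $\tfrac{3}{p}+\tfrac{2}{q}=2$, should yield
\begin{align*}
|I_{1}|\leq\tfrac{1}{4}\|\Delta u\|_{L^{2}}^{2}+C\|\nabla_{h}u^{h}\|_{\dot{B}^{0}_{p,2p/3}}^{q}\|\nabla u\|_{L^{2}}^{2}.
\end{align*}

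\textbf{Bounding $I_{2}$ and $I_{3}$.} The crucial tool is Lemma \ref{le2.3} applied to $f=\partial_{k}\partial_{j}d$: since $\|\partial_{k}\partial_{j}d\|_{\dot{B}^{-1}_{\infty,\infty}}\leq C\|\nabla d\|_{\dot{B}^{0}_{\infty,\infty}}$ by Bernstein, one gets
\begin{align*}
\|\nabla^{2}d\|_{L^{4}}^{2}\leq C\|\nabla d\|_{\dot{B}^{0}_{\infty,\infty}}\|\nabla\Delta d\|_{L^{2}}.
\end{align*}
Together with $\|\nabla d\|_{L^{4}}^{2}\leq C\|\Delta d\|_{L^{2}}$ (applying Lemma \ref{le2.3} to $\nabla d$ with $|d|=1$), and using the pointwise identity $d\cdot\Delta d=-|\nabla d|^{2}$ coming from $|d|=1$ to expand $\Delta(|\nabla d|^{2}d)$, H\"older estimates such as $|I_{2}|\leq\|\nabla u\|_{L^{2}}\|\nabla^{2}d\|_{L^{4}}\|\Delta d\|_{L^{4}}$ followed by Young's inequality give
\begin{align*}
|I_{2}|+|I_{3}|\leq\tfrac{1}{4}\bigl(\|\Delta u\|_{L^{2}}^{2}+\|\nabla\Delta d\|_{L^{2}}^{2}\bigr)+C\|\nabla d\|_{\dot{B}^{0}_{\infty,\infty}}^{2}\bigl(\|\nabla u\|_{L^{2}}^{2}+\|\Delta d\|_{L^{2}}^{2}\bigr).
\end{align*}

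\textbf{Closing.} With $\Phi(t):=\|\nabla u(\cdot,t)\|_{L^{2}}^{2}+\|\Delta d(\cdot,t)\|_{L^{2}}^{2}$, absorbing the dissipation on the left gives
\begin{align*}
\Phi'(t)+\|\Delta u\|_{L^{2}}^{2}+\|\nabla\Delta d\|_{L^{2}}^{2}\leq C\bigl(\|\nabla_{h}u^{h}\|_{\dot{B}^{0}_{p,2p/3}}^{q}+\|\nabla d\|_{\dot{B}^{0}_{\infty,\infty}}^{2}\bigr)\Phi(t),
\end{align*}
and the hypothesis \eqref{eq3.3} makes the coefficient integrable on $[0,T]$; Gr\"onwall and a further integration in time yield \eqref{eq3.4}. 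The hardest step is Step 2: reducing the Besov norm in $I_{1}$ to only the two horizontal components $\nabla_{h}u^{h}$ over the full admissible range $\tfrac{3}{2}<p\leq\infty$ genuinely uses divergence-freeness and a delicate dyadic/paraproduct analysis, whereas bounding $I_{1}$ in terms of all of $\nabla u$ would be routine.
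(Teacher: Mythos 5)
There is a genuine gap, and it sits exactly where you acknowledge the difficulty lies. Your Step 2 asserts, without carrying out the analysis, that a dyadic/paraproduct argument ``should yield'' the clean bound
\begin{equation*}
|I_{1}|\leq\tfrac{1}{4}\|\Delta u\|_{L^{2}}^{2}+C\|\nabla_{h}u^{h}\|_{\dot{B}^{0}_{p,2p/3}}^{q}\|\nabla u\|_{L^{2}}^{2},
\end{equation*}
and your Gr\"onwall closure depends on this linear-in-$\Phi$ form. That bound is not what the method delivers, and for the endpoint $p=\infty$ (where the norm is $\|\nabla_{h}u^{h}\|_{\dot{B}^{0}_{\infty,\infty}}$, which does not control any Lebesgue norm) it is essentially the Kozono--Ogawa--Taniuchi situation in which a logarithmic loss in higher norms is unavoidable. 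The paper's proof makes this loss explicit: after reducing $I_{1}\leq\int|\nabla_{h}u^{h}||\omega|^{2}dx$ via the vorticity identities $\partial_{3}u^{3}=-(\partial_{1}u^{1}+\partial_{2}u^{2})$ and $\omega_{3}=\partial_{1}u^{2}-\partial_{2}u^{1}$, it splits $\nabla_{h}u^{h}$ into frequencies $j<-N$, $|j|\leq N$, $j>N$; the middle band costs a factor $N^{\frac{2p-3}{2p}}$ from H\"older in $j$ (passing from $\ell^{1}$ summation to the $\ell^{2p/3}$ norm), and the high band leaves a term $C2^{-N/2}\|\omega\|_{L^{2}}\|\nabla\omega\|_{L^{2}}^{2}$ that can only be absorbed by choosing $N\sim\ln(e+\|\omega\|_{L^{2}}^{2})$. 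The resulting differential inequality is $\frac{d}{dt}H\leq C(1+\|\nabla_{h}u^{h}\|_{\dot{B}^{0}_{p,2p/3}}^{q}+\|\nabla d\|_{\dot{B}^{0}_{\infty,\infty}}^{2})\,H\ln(e+H)$, closed by a Gr\"onwall argument for $\ln(e+H)$ and yielding a double-exponential constant $C_{0}$ --- not the single-exponential bound your linear Gr\"onwall would give. Without the logarithmic correction and the $N$-dependent absorption, your argument does not close.

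A secondary issue: you run the estimate in the full-gradient ($H^{1}$) formulation, so your trilinear term is $-\int\partial_{k}u^{i}\partial_{i}u^{j}\partial_{k}u^{j}dx$ rather than $\int\omega_{i}\partial_{i}u^{j}\omega_{j}dx$. The pointwise reduction to ``at least one factor of $\nabla_{h}u^{h}$ in every term'' is exactly what the vorticity formulation buys (every term not already containing $\nabla_{h}u^{h}$ carries either $\partial_{3}u^{3}$ or $\omega_{3}$, both of which are horizontal derivatives of $u^{h}$); for the full-gradient trilinear form this reduction is not automatic and you would need to justify it separately, e.g.\ by integrations by parts using $\nabla\cdot u=0$. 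Your treatment of the coupling terms is fine: the cancellation between $\int(\Delta d\cdot\nabla d)\cdot\Delta u\,dx$ and $-\int\Delta u^{i}\,\partial_{i}d\cdot\Delta d\,dx$ is correct and slightly cleaner than the paper, which estimates these two contributions separately with the same Meyer--Gerard--Oru interpolation \eqref{eq2.3}; and your bounds for $I_{2}$, $I_{3}$ parallel the paper's \eqref{eq3.15}, \eqref{eq3.18}, \eqref{eq3.19}. But the lemma is not proved until the $I_{1}$ estimate is actually established, in its correct (logarithmically weakened) form, and the closing argument is adjusted accordingly.
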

\begin{proof}
Firstly, multiplying the first equation
of \eqref{eq1.1} by $u$, integrating over $\mathbb{R}^{3}$, after
integration by parts, one has
\begin{equation}\label{eq3.5}
\frac{1}{2}\frac{d}{dt}\|u\|_{L^{2}}^{2}+\|\nabla u\|_{L^{2}}^{2}=-\int_{\mathbb{R}^{3}}\Delta
d\cdot\nabla d\cdot udx,
\end{equation}
where we have used the facts \eqref{eq1.2} and
$\nabla\cdot u=0$ imply that
$$
  \int_{\mathbb{R}^{3}}\nabla(\frac{|\nabla d|^{2}}{2})\cdot udx=-\int_{\mathbb{R}^{3}}(\frac{|\nabla d|^{2}}{2})(\nabla\cdot u)dx=0.
$$
Multiplying the third equation of \eqref{eq1.1} by $-\Delta d$,
integrating over $\mathbb{R}^{3}$, and using the facts $|d|=1$ and
$|\nabla d|^{2}=-d\cdot\Delta d$, one obtains that
\begin{align}\label{eq3.6}
  \frac{1}{2}\frac{d}{dt}\|\nabla d\|_{L^{2}}^{2}+\|\Delta d\|_{L^{2}}^{2}-\int_{\mathbb{R}^{3}}(u\cdot\nabla) d\cdot\Delta ddx
  &=-\int_{\mathbb{R}^{3}}|\nabla d|^{2}d\cdot\Delta d dx\nonumber\\
  &=\int_{\mathbb{R}^{3}}|d\cdot\Delta d|^{2}dx\leq\int_{\mathbb{R}^{3}}|\Delta
  d|^{2}dx.
\end{align}
Adding the above estimates \eqref{eq3.5} and \eqref{eq3.6} together,
we have
\begin{equation*}
  \frac{d}{dt}\big(\|u\|_{L^{2}}^{2}+\|\nabla d\|_{L^{2}}^{2}\big)\leq 0,
\end{equation*}
which yields the following basic energy inequality:
\begin{equation}\label{eq3.7}
  \sup_{t\geq0}\big(\|u(\cdot, t)\|_{L^{2}}^{2}+\|\nabla d(\cdot, t)\|_{L^{2}}^{2}\big)\leq
  \|u_{0}\|_{L^{2}}^{2}+\|\nabla d_{0}\|_{L^{2}}^{2}.
\end{equation}

Secondly, taking $\nabla\times$  on the first equation
of \eqref{eq1.1}, we see that
\begin{equation}\label{eq3.8}
  \partial_{t}\omega+(u\cdot\nabla)\omega-\Delta \omega=(\omega\cdot\nabla)u-\nabla\times(\Delta d\cdot \nabla d),
\end{equation}
where we have used the facts \eqref{eq1.2} and  $\nabla\times\nabla(\frac{|\nabla d|^{2}}{2})=0$.
Multiplying \eqref{eq3.8} by $w$, and integrating over $\mathbb{R}^{3}$, one has
\begin{align}\label{eq3.9}
  \frac{1}{2}\frac{d}{dt}\|\omega\|_{L^{2}}^{2}+\|\nabla \omega\|_{L^{2}}^{2}&=\int_{\mathbb{R}^{3}}(\omega\cdot\nabla)u\cdot \omega dx+\int_{\mathbb{R}^{3}}(\Delta d\cdot\nabla d)\cdot(\nabla\times \omega)dx\nonumber\\
  &\overset{\operatorname{def}}{=}I_{1}+I_{2}.
\end{align}
Note that
\begin{align*}
  I_{1}=\sum_{i,j=1}^{3}\int_{\mathbb{R}^{3}}\omega_{i}\partial_{i}u^{j}\omega_{j} dx
  &=\sum_{i,j=1}^{2}\int_{\mathbb{R}^{3}}\omega_{i}\partial_{i}u^{j}\omega_{j} dx
  +\sum_{i=1}^{2}\int_{\mathbb{R}^{3}}\omega_{i}\partial_{i}u^{3}\omega_{3} dx\\
  &+\sum_{j=1}^{2}\int_{\mathbb{R}^{3}}\omega_{3}\partial_{3}u^{j}\omega_{j} dx
  +\int_{\mathbb{R}^{3}}\omega_{3}\partial_{3}u^{3}\omega_{3} dx.
\end{align*}
Thus, by using the facts
\begin{align*}
  \partial_{3}u^{3}=-(\partial_{1}u^{1}+\partial_{2}u^{2}), \ \ \ \omega_{3}=\partial_{1}u^{2}-\partial_{2}u^{1},
\end{align*}
we can easily see that
\begin{equation}\label{eq3.10}
  I_{1}\leq \int_{\mathbb{R}^{3}}|\nabla_{h}u^{h}||\omega|^{2}dx.
\end{equation}
Applying the Littlewood-Paley dyadic decomposition for $\nabla_{h}u^{h}$, i.e.,
\begin{equation*}
  \nabla_{h}u^{h}=\sum_{j<-N}\Delta_{j}\nabla_{h}u^{h}+\sum_{j=-N}^{N}\Delta_{j}\nabla_{h}u^{h}+\sum_{j>N}\Delta_{j}\nabla_{h}u^{h}.
\end{equation*}
Then, we have
\begin{align}\label{eq3.11}
  I_{1}&\leq \sum_{j<-N}\int_{\mathbb{R}^{3}}|\Delta_{j}\nabla_{h}u^{h}||\omega|^{2}dx+\sum_{j=-N}^{N}\int_{\mathbb{R}^{3}}|\Delta_{j}\nabla_{h}u^{h}||\omega|^{2}dx
  +\sum_{j>N}\int_{\mathbb{R}^{3}}|\Delta_{j}\nabla_{h}u^{h}||\omega|^{2}dx\nonumber\\
  &\overset{\operatorname{def}}{=}I_{11}+I_{12}+I_{13}.
\end{align}
Now we estimate the terms $I_{1j}$ ($j=1,2,3$) one by one. For $I_{11}$, by using the H\"{o}lder inequality, the Bernstein inequality \eqref{eq2.1} and the fact $\|\nabla u\|_{L^{2}}\leq C\|\omega\|_{L^{2}}$ (cf. \cite{KT00}), one has
\begin{align}\label{eq3.12}
  I_{11}&=\sum_{j<-N}\int_{\mathbb{R}^{3}}|\Delta_{j}\nabla_{h}u^{h}||\omega|^{2}dx\nonumber\\
  &\leq C\sum_{j<-N}\|\Delta_{j}\nabla_{h}u^{h}\|_{L^{\infty}}\|\omega\|_{L^{2}}^{2}\nonumber\\
  &\leq C \sum_{j<-N}2^{\frac{3}{2}j}\|\Delta_{j}\nabla_{h}u^{h}\|_{L^{2}}\|\omega\|_{L^{2}}^{2}\nonumber\\
  &\leq C \Big(\sum_{j<-N}2^{3j}\Big)^{\frac{1}{2}}\Big(\sum_{j<-N}\|\Delta_{j}\nabla_{h}u^{h}\|_{L^{2}}^{2}\Big)^{\frac{1}{2}}\|\omega\|_{L^{2}}^{2}\nonumber\\
  &\leq C2^{-\frac{3}{2}N}\|\nabla u\|_{L^{2}}\|\omega\|_{L^{2}}^{2}\nonumber\\
  &\leq C2^{-\frac{3}{2}N}\|\omega\|_{L^{2}}^{3}.
\end{align}
For $I_{12}$, let $p'$ be a conjugate index of $p$, i.e., $\frac{1}{p}+\frac{1}{p'}=1$. Then applying the H\"{o}lder inequality and Definition \ref{de2.1},  we get
\begin{align}\label{eq3.13}
  I_{12}&=\sum_{j=-N}^{N}\int_{\mathbb{R}^{3}}|\Delta_{j}\nabla_{h}u^{h}||\omega|^{2}dx\nonumber\\
  &\leq C\sum_{j=-N}^{N}\|\Delta_{j}\nabla_{h}u^{h}\|_{L^{p}}\|\omega\|_{L^{2p'}}^{2}\nonumber\\
  &\leq C\Big(\sum_{j=-N}^{N}1^{\frac{2p}{2p-3}}\Big)^{\frac{2p-3}{2p}}
  \Big(\sum_{j=-N}^{N}\|\Delta_{j}\nabla_{h}u^{h}\|_{L^{p}}^{\frac{2p}{3}}\Big)^{\frac{3}{2p}}\|\omega\|_{L^{2p'}}^{2}\nonumber\\
  &\leq CN^{\frac{2p-3}{2p}}\|\nabla_{h}u^{h}\|_{\dot{B}^{0}_{p,\frac{2p}{3}}}\|\omega\|_{L^{2p'}}^{2}\nonumber\\
  &\leq CN^{\frac{2p-3}{2p}}\|\nabla_{h}u^{h}\|_{\dot{B}^{0}_{p,\frac{2p}{3}}}\|\omega\|_{L^{2}}^{2(1-\frac{3}{2p})}\|\nabla\omega\|_{L^{2}}^{\frac{3}{p}}\nonumber\\
  &\leq \frac{1}{8}\|\nabla\omega\|_{L^{2}}^{2}+CN\|\nabla_{h}u^{h}\|_{\dot{B}^{0}_{p,\frac{2p}{3}}}^{\frac{2p}{2p-3}}\|\omega\|_{L^{2}}^{2},
\end{align}
where we have used the
following Gagliardo-Nirenberg inequality:
\begin{equation*}
  \|w\|_{L^{\frac{2p}{p-1}}}\leq \|w\|_{L^{2}}^{1-\frac{3}{2p}}\|
  \nabla w\|_{L^{2}}^{\frac{3}{2p}}\ \ \text{for}\ \ p\geq\frac{3}{2}.
\end{equation*}
For $I_{13}$, from the equality
\begin{align*}
  \nabla\times \omega=\nabla\times(\nabla\times u)=\nabla(\nabla\cdot u)-\Delta u
  =-\Delta u,
\end{align*}
one can easily derived from the integration by parts that
\begin{align*}
  \|\nabla^{2} u\|_{L^{2}}=\|\Delta u\|_{L^{2}}\leq C\|\nabla \omega\|_{L^{2}}.
\end{align*}
Hence, based on the above inequality, applying the H\"{o}lder inequality and the Bernstein inequality \eqref{eq2.1}
again, $I_{13}$ can be estimated as follows:
\begin{align}\label{eq3.14}
  I_{13}&=\sum_{j>N}\int_{\mathbb{R}^{3}}|\Delta_{j}\nabla_{h}u^{h}||\omega|^{2}dx\nonumber\\
  &\leq C\sum_{j>N}\|\Delta_{j}\nabla_{h}u^{h}\|_{L^{3}}\|\omega\|_{L^{2}}\|w\|_{L^{6}}\nonumber\\
  &\leq C \sum_{j>N}2^{\frac{j}{2}}\|\Delta_{j}\nabla_{h}u^{h}\|_{L^{2}}\|\omega\|_{L^{2}}\|\nabla \omega\|_{L^{2}}\nonumber\\
  &\leq C \Big(\sum_{j>N}2^{-j}\Big)^{\frac{1}{2}}\Big(\sum_{j>N}2^{2j}\|\Delta_{j}\nabla_{h}u^{h}\|_{L^{2}}^{2}\Big)^{\frac{1}{2}}\|\omega\|_{L^{2}}\|\nabla \omega\|_{L^{2}}\nonumber\\
  &\leq C2^{-\frac{N}{2}}\|\omega\|_{L^{2}}\|\nabla^{2} u\|_{L^{2}}\|\nabla \omega\|_{L^{2}}\nonumber\\
  &\leq C2^{-\frac{N}{2}}\|\omega\|_{L^{2}}\|\nabla \omega\|_{L^{2}}^{2}.
\end{align}
As for $I_{2}$, by using the H\"{o}lder's inequality, the interpolation inequality  \eqref{eq2.3} with $f=\Delta d$ and $|d|=1$, we get
\begin{align}\label{eq3.15}
  I_{2}&=\int_{\mathbb{R}^{3}}(\Delta d\cdot\nabla d)\cdot(\nabla\times \omega)dx\nonumber\\
  &\leq C\||\nabla d||\Delta d|\|_{L^{2}}\|\nabla \omega\|_{L^{2}}\nonumber\\
  &\leq C \|\nabla d\|_{L^{4}}\|\Delta d\|_{L^{4}}\|\nabla \omega\|_{L^{2}}\nonumber\\
  &\leq \frac{1}{8}\|\nabla \omega\|_{L^{2}}^{2}+C\|\nabla d\|_{L^{4}}^{2}\|\Delta d\|_{L^{4}}^{2}\nonumber\\
  &\leq \frac{1}{8}\|\nabla \omega\|_{L^{2}}^{2}+C\|d\|_{L^{\infty}}\|\Delta d\|_{L^{2}}\|\nabla d\|_{\dot{B}^{0}_{\infty,\infty}}\|\nabla\Delta d\|_{L^{2}}\nonumber\\
  &\leq \frac{1}{8}\|\nabla \omega\|_{L^{2}}^{2}+\frac{1}{8}\|\nabla\Delta d\|_{L^{2}}^{2}+C\|\nabla d\|_{\dot{B}^{0}_{\infty,\infty}}^{2}\|\Delta d\|_{L^{2}}^{2},
\end{align}
where we have used the
following Gagliardo-Nirenberg inequality with $q=2$:
\begin{equation*}
  \|\nabla f\|_{L^{2q}}\leq \|f\|_{L^{\infty}}^{\frac{1}{2}}\|
  \Delta f\|_{L^{q}}^{\frac{1}{2}}\ \ \text{for}\ \ q>\frac{3}{2}.
\end{equation*}
Inserting estimates \eqref{eq3.12}--\eqref{eq3.15} into
\eqref{eq3.9}, it follows that
\begin{align}\label{eq3.16}
  \frac{d}{dt}\|\omega \|_{L^{2}}^{2}+\frac{3}{2}\|\nabla \omega\|_{L^{2}}^{2}&\leq \frac{1}{4}\|\nabla\Delta d\|_{L^{2}}^{2}+C2^{-\frac{3}{2}N}\|\omega\|_{L^{2}}^{3}+CN\|\nabla_{h}u^{h}\|_{\dot{B}^{0}_{p,\frac{2p}{3}}}^{\frac{2p}{2p-3}}\|\omega\|_{L^{2}}^{2}\nonumber\\
  &+C2^{-\frac{N}{2}}\|\omega\|_{L^{2}}\|\nabla \omega\|_{L^{2}}^{2}+C\|\nabla d\|_{\dot{B}^{0}_{\infty,\infty}}^{2}\|\Delta d\|_{L^{2}}^{2}.
\end{align}

For the estimate of $\Delta d$, taking $\Delta$ on the third equation of \eqref{eq1.1}, multiplying $\Delta d$
and integrating over $\mathbb{R}^{3}$, one has
\begin{align}\label{eq3.17}
  \frac{1}{2}\frac{d}{dt}\|\Delta d\|_{L^{2}}^{2}+\|\nabla \Delta d\|_{L^{2}}^{2}
  &=-\int_{\mathbb{R}^{3}}\Delta((u\cdot\nabla) d)\cdot\Delta d dx+\int_{\mathbb{R}^{3}}\Delta(|\nabla d|^{2}d)\cdot\Delta d dx\nonumber\\
  &\overset{\operatorname{def}}{=}J_{1}+J_{2}.
\end{align}
Note that the fact $\nabla\cdot u=0$ implies that
\begin{align*}
  \int_{\mathbb{R}^{3}}(u\cdot\nabla)\Delta d\cdot\Delta d dx=\frac{1}{2}\int_{\mathbb{R}^{3}}u\cdot\nabla(|\Delta d|^{2})dx=\frac{1}{2}\int_{\mathbb{R}^{3}}(\nabla\cdot u)(|\Delta d|^{2})dx=0.
\end{align*}
Therefore, applying \eqref{eq2.3} with $f=\nabla^{2}d$, we can estimate $J_{1}$ as follows:
\begin{align}\label{eq3.18}
  J_{1}&=-\int_{\mathbb{R}^{3}}\Delta((u\cdot\nabla) d)\cdot\Delta d dx\nonumber\\
  &\leq \int_{\mathbb{R}^{3}}|\Delta u||\nabla d||\Delta d| dx+2\int_{\mathbb{R}^{3}}|\nabla u||\nabla^{2}d||\Delta d| dx\nonumber\\
  &\leq C\Big(\|\Delta u\|_{L^{2}}\|\nabla d\|_{L^{4}}\|\Delta d\|_{L^{4}}+\|\nabla u\|_{L^{2}}\|\nabla^{2}d\|_{L^{4}}\|\Delta d\|_{L^{4}}\Big)\nonumber\\
  &\leq C\Big(\|\nabla \omega\|_{L^{2}}\|d\|_{L^{\infty}}^{\frac{1}{2}}\|\Delta d\|_{L^{2}}^{\frac{1}{2}}\|\nabla d\|_{\dot{B}^{0}_{\infty,\infty}}^{\frac{1}{2}}\|\nabla\Delta d\|_{L^{2}}^{\frac{1}{2}}
  +\|\omega\|_{L^{2}}\|\nabla d\|_{\dot{B}^{0}_{\infty,\infty}}\|\nabla\Delta d\|_{L^{2}}\Big)\nonumber\\
  &\leq \frac{1}{8}\|\nabla \omega\|_{L^{2}}^{2}+\frac{1}{8}\|\nabla \Delta d\|_{L^{2}}^{2}
  +C\|\nabla d\|_{\dot{B}^{0}_{\infty,\infty}}^{2}\big(\|\omega\|_{L^{2}}^{2}+\|\Delta d\|_{L^{2}}^{2}\big),
\end{align}
where we have used the Gagliardo-Nirenberg inequalities:
\begin{align*}
  \|\nabla^{2}d\|_{L^{4}}\leq \|\nabla d\|_{L^{\infty}}^{\frac{1}{2}}\|\nabla \Delta d\|_{L^{2}}^{\frac{1}{2}}, \ \ \
  \|\Delta d\|_{L^{4}}\leq \|\nabla d\|_{L^{\infty}}^{\frac{1}{2}}\|\nabla \Delta d\|_{L^{2}}^{\frac{1}{2}}.
\end{align*}
For $J_{2}$, after integration by parts, by using the H\"{o}lder inequality, the
Young inequality, the inequality \eqref{eq2.3} with $f=\nabla^{2}d$ and the fact $|d|=1$, we obtain
\begin{align}\label{eq3.19}
  J_{2}&\leq \Big{|}\int_{\mathbb{R}^{3}}\Delta(d|\nabla d|^{2})\cdot\Delta d dx\Big{|}
  =\Big{|}\int_{\mathbb{R}^{3}}\nabla(d|\nabla d|^{2})\cdot\nabla\Delta d dx\Big{|}\nonumber\\
  &\leq\Big{|}\int_{\mathbb{R}^{3}}\nabla d|\nabla d|^{2}\cdot\nabla\Delta d dx\Big{|}
  +\Big{|}\int_{\mathbb{R}^{3}} d\nabla(|\nabla d|^{2})\cdot\nabla\Delta d dx\Big{|}\nonumber\\
  &\leq\Big{|}\int_{\mathbb{R}^{3}}\nabla(\nabla d|\nabla d|^{2})\cdot\Delta d dx\Big{|}
  +\Big{|}\int_{\mathbb{R}^{3}} d\nabla(|\nabla d|^{2})\cdot\nabla\Delta d dx\Big{|}\nonumber\\
  &\leq C\Big{|}\int_{\mathbb{R}^{3}}|\nabla d|^{2}|\nabla^{2} d|^{2} dx\Big{|}
  +C\Big{|}\int_{\mathbb{R}^{3}} |\nabla d||\nabla^{2}d||\nabla\Delta d| dx\Big{|}\nonumber\\
  &\leq C\|\nabla d\|_{L^{4}}^{2}\|\nabla^{2}d\|_{L^{4}}^{2}
  +C\|\nabla d\|_{L^{4}}\|\nabla^{2} d\|_{L^{4}}\|\nabla\Delta d\|_{L^{2}}\nonumber\\
  &\leq \frac{1}{8}\|\nabla\Delta d\|_{L^{2}}^{2}+C\|\nabla d\|_{L^{4}}^{2}\|\nabla^{2} d\|_{L^{4}}^{2}\nonumber\\
  &\leq \frac{1}{8}\|\nabla\Delta d\|_{L^{2}}^{2}+C\|d\|_{L^{\infty}}\|\Delta d\|_{L^{2}}
  \|\nabla d\|_{\dot{B}^{0}_{\infty,\infty}}\|\nabla\Delta d\|_{L^{2}}\nonumber\\
  &\leq \frac{1}{4}\|\nabla\Delta d\|_{L^{2}}^{2}+C\|\nabla d\|_{\dot{B}^{0}_{\infty,\infty}}^{2}\|\Delta d\|_{L^{2}}^{2}.
\end{align}
Combining the estimates \eqref{eq3.18} and \eqref{eq3.19}, we obtain that
\begin{align}\label{eq3.20}
  \frac{d}{dt}\|\Delta d\|_{L^{2}}^{2}+\frac{5}{4}\|\nabla \Delta d\|_{L^{2}}^{2}
  \leq \frac{1}{4}\|\nabla \omega\|_{L^{2}}^{2}
  +C\|\nabla d\|_{\dot{B}^{0}_{\infty,\infty}}^{2}\big(\|\omega\|_{L^{2}}^{2}+\|\Delta d\|_{L^{2}}^{2}\big).
\end{align}

Finally, putting \eqref{eq3.16} and \eqref{eq3.20} together, we obtain
\begin{align}\label{eq3.21}
  \frac{d}{dt}(\|\omega \|_{L^{2}}^{2}&+\|\Delta d\|_{L^{2}}^{2})+\frac{5}{4}\|\nabla \omega\|_{L^{2}}^{2}+\|\nabla \Delta d\|_{L^{2}}^{2}
  \leq C2^{-\frac{3}{2}N}\|\omega\|_{L^{2}}^{3}
  +CN\|\nabla_{h}u^{h}\|_{\dot{B}^{0}_{p,\frac{2p}{3}}}^{q}\|\omega\|_{L^{2}}^{2}\nonumber\\
  &+C2^{-\frac{N}{2}}\|\omega\|_{L^{2}}\|\nabla \omega\|_{L^{2}}^{2}
  +C\|\nabla d\|_{\dot{B}^{0}_{\infty,\infty}}^{2}\big(\|\omega\|_{L^{2}}^{2}+\|\Delta d\|_{L^{2}}^{2}\big),
\end{align}
where $q=\frac{2p}{2p-3}$ satisfying $\frac{3}{p}+\frac{2}{q}=2$. Now we choose $N$ such that
\begin{equation*}
   C2^{-\frac{N}{2}}\|\omega\|_{L^{2}}\leq \frac{1}{4},
\end{equation*}
i.e.,
\begin{equation*}
   N\geq \frac{\ln(\|\omega\|_{L^{2}}^{2}+e)+\ln C}{\ln2}+2.
\end{equation*}
Thus, the inequality \eqref{eq3.21} yields that
\begin{align}\label{eq3.22}
  \frac{d}{dt}(\|\omega \|_{L^{2}}^{2}&+\|\Delta d\|_{L^{2}}^{2})+\|\nabla \omega\|_{L^{2}}^{2}+\|\nabla \Delta d\|_{L^{2}}^{2}
  \leq
  C(1+\|\nabla_{h}u^{h}\|_{\dot{B}^{0}_{p,\frac{2p}{3}}}^{q}+\|\nabla d\|_{\dot{B}^{0}_{\infty,\infty}}^{2})\nonumber\\
  &\times(\|\omega\|_{L^{2}}^{2}+\|\Delta d\|_{L^{2}}^{2})\ln(\|\omega\|_{L^{2}}^{2}+\|\Delta d\|_{L^{2}}^{2}+e).
\end{align}
Setting $H(t)=\|\omega(t)\|_{L^{2}}^{2}+\|\Delta d(t)\|_{L^{2}}^{2}$. It follows from \eqref{eq3.22} that
\begin{align}\label{eq3.23}
  \frac{d}{dt}\ln(e+H(t))\leq
  C(1+\|\nabla_{h}u^{h}\|_{\dot{B}^{0}_{p,\frac{2p}{3}}}^{q}+\|\nabla d\|_{\dot{B}^{0}_{\infty,\infty}}^{2})\ln(e+H(t)).
\end{align}
Integrating \eqref{eq3.23} in time from $0$ to $t$ with $0<t<T$, we have
\begin{align*}
  H(t)\leq (e+H(0))\exp\Big\{\exp\big\{Ct+C\int_{0}^{t}(\|\nabla_{h}u^{h}\|_{\dot{B}^{0}_{p,\frac{2p}{3}}}^{q}+\|\nabla d\|_{\dot{B}^{0}_{\infty,\infty}}^{2})ds\big\}\Big\}.
\end{align*}
Namely,
\begin{align}\label{eq3.24}
  \sup_{0\leq t<T}(\|\omega\|_{L^{2}}^{2}&+\|\Delta d\|_{L^{2}}^{2})\leq (e+\|\nabla u_{0}\|_{L^{2}}^{2}+\|\Delta d_{0}\|_{L^{2}}^{2})\nonumber\\
  &\times\exp\Big\{\exp\big\{CT+C\int_{0}^{T}(\|\nabla_{h}u^{h}\|_{\dot{B}^{0}_{p,\frac{2p}{3}}}^{q}+\|\nabla d\|_{\dot{B}^{0}_{\infty,\infty}}^{2})ds\big\}\Big\}.
\end{align}
We complete the proof of  Lemma \ref{le3.1}.
\end{proof}

\medskip

Next,  in order to derive the desired result \eqref{eq3.2}, let us establish the following result.
\begin{lemma}\label{le3.2}
Under the assumptions of  Lemma \ref{le3.1}, we have
\begin{equation}\label{eq3.25}
  \int_{0}^{T}\big(\|\nabla\Delta u(\cdot,t)\|_{L^{2}}+\|\Delta^{2} d(\cdot,t)\|_{L^{2}}\big)dt\leq C,
\end{equation}
where $C$ is constant depending only on $\|u_{0}\|_{H^{2}}$, $\|d_{0}\|_{H^{3}}$, $T$ and $M_{0}$.
\end{lemma}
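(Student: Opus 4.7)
The plan is to promote the energy bounds of Lemma \ref{le3.1} by one derivative and then invoke Cauchy--Schwarz on the finite interval $[0,T]$. Concretely, I aim to establish
\begin{align*}
\sup_{0\le t\le T}\bigl(\|\Delta u(t)\|_{L^{2}}^{2}+\|\nabla\Delta d(t)\|_{L^{2}}^{2}\bigr)+\int_{0}^{T}\bigl(\|\nabla\Delta u\|_{L^{2}}^{2}+\|\Delta^{2}d\|_{L^{2}}^{2}\bigr)\,dt\le C,
\end{align*}
from which \eqref{eq3.25} follows at once via
$\int_{0}^{T}\|\nabla\Delta u\|_{L^{2}}dt\le T^{1/2}\bigl(\int_{0}^{T}\|\nabla\Delta u\|_{L^{2}}^{2}dt\bigr)^{1/2}$
and the analogous inequality for $\Delta^{2}d$.

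The starting point is to apply $\Delta$ to the momentum equation and test with $\Delta u$. The pressure drops out since $\nabla\cdot\Delta u=0$, the gradient part of the stress tensor vanishes by \eqref{eq1.2}, and the transport self-energy $\int(u\cdot\nabla)\Delta u\cdot\Delta u\,dx=0$ by $\nabla\cdot u=0$, yielding
\begin{align*}
\frac{1}{2}\frac{d}{dt}\|\Delta u\|_{L^{2}}^{2}+\|\nabla\Delta u\|_{L^{2}}^{2}=-\int_{\mathbb{R}^{3}}\Delta[(u\cdot\nabla)u]\cdot\Delta u\,dx-\int_{\mathbb{R}^{3}}\Delta(\Delta d\cdot\nabla d)\cdot\Delta u\,dx.
\end{align*}
For $d$, I would apply $\Delta$ to the third equation of \eqref{eq1.1} and test with $-\Delta^{2}d$ (equivalently, apply $\nabla\Delta$ and test with $\nabla\Delta d$); using once again the cancellation obtained by integrating $(u\cdot\nabla)\nabla\Delta d\cdot\nabla\Delta d$ by parts, this gives
\begin{align*}
\frac{1}{2}\frac{d}{dt}\|\nabla\Delta d\|_{L^{2}}^{2}+\|\Delta^{2}d\|_{L^{2}}^{2}=\int_{\mathbb{R}^{3}}\Delta[(u\cdot\nabla)d]\cdot\Delta^{2}d\,dx-\int_{\mathbb{R}^{3}}\Delta(|\nabla d|^{2}d)\cdot\Delta^{2}d\,dx.
\end{align*}

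Next I would expand every nonlinear term by Leibniz and estimate each piece using H\"older's inequality, Sobolev embeddings (noting $\|\nabla u\|_{L^{2}}\le C\|\omega\|_{L^{2}}$ and $\|\Delta u\|_{L^{2}}\le C\|\nabla\omega\|_{L^{2}}$), Gagliardo--Nirenberg interpolations such as
\begin{equation*}
\|\Delta u\|_{L^{4}}\le C\|\Delta u\|_{L^{2}}^{1/4}\|\nabla\Delta u\|_{L^{2}}^{3/4},\qquad \|\nabla^{2}d\|_{L^{4}}\le C\|\Delta d\|_{L^{2}}^{1/2}\|\nabla\Delta d\|_{L^{2}}^{1/2},
\end{equation*}
the constraint $|d|=1$ to bound every isolated factor of $d$ in $L^{\infty}$, and the interpolation \eqref{eq2.3} whenever an $L^{4}$-factor can be traded against a Besov norm already controlled in Lemma \ref{le3.1}. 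The goal is to dominate every nonlinearity by
\begin{equation*}
\varepsilon\bigl(\|\nabla\Delta u\|_{L^{2}}^{2}+\|\Delta^{2}d\|_{L^{2}}^{2}\bigr)+C_{\varepsilon}\Phi(t)\bigl(\|\Delta u\|_{L^{2}}^{2}+\|\nabla\Delta d\|_{L^{2}}^{2}+1\bigr),
\end{equation*}
with $\Phi\in L^{1}(0,T)$ built from quantities such as $\|\nabla\omega\|_{L^{2}}^{2}$, $\|\nabla\Delta d\|_{L^{2}}^{2}$, $\|\nabla d\|_{\dot{B}^{0}_{\infty,\infty}}^{2}$ and their products, all of which Lemma \ref{le3.1} puts in $L^{1}(0,T)$. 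Choosing $\varepsilon$ small so the top-order terms are absorbed on the left and applying Gr\"onwall's inequality then yields the claimed $H^{2}$/$H^{3}$ energy estimate.

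The main obstacle will be the two highest-order coupling terms, namely $\int\Delta(\Delta d\cdot\nabla d)\cdot\Delta u\,dx$ in the $u$-identity and $\int\Delta[(u\cdot\nabla)d]\cdot\Delta^{2}d\,dx$ in the $d$-identity. When expanded by Leibniz these produce genuine third-order derivatives of $d$ and of $u$ that are \emph{not} directly controlled by Lemma \ref{le3.1}; the derivatives must be distributed so that one top-order factor lands on $\nabla\Delta u$ or $\Delta^{2}d$ and is absorbed on the left, while the surviving middle-order factors interpolate into norms that are integrable in time. This bookkeeping, rather than any single sharp estimate, is the delicate part of the argument and explains why we work at exactly the $H^{2}$-level for $u$ and $H^{3}$-level for $d$, matching the $H^{3}$/$H^{4}$ regularity of the initial data.
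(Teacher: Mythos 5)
Your plan coincides with the paper's proof: the paper likewise applies $\Delta$ to the momentum equation (testing with $\Delta u$) and $\nabla\Delta$ to the orientation equation (testing with $\nabla\Delta d$), expands the nonlinearities by Leibniz, distributes derivatives so that one top-order factor is absorbed into $\|\nabla\Delta u\|_{L^{2}}^{2}$ or $\|\Delta^{2}d\|_{L^{2}}^{2}$, controls the remaining factors by Gagliardo--Nirenberg interpolation together with the bounds of Lemma \ref{le3.1}, and closes with Gr\"onwall and Cauchy--Schwarz in time. The only cosmetic difference is that the paper does not need to re-invoke the Besov interpolation \eqref{eq2.3} at this stage, since $\sup_{t}\|\Delta d\|_{L^{2}}$ and $\int_{0}^{T}\|\nabla\Delta d\|_{L^{2}}^{2}dt$ from Lemma \ref{le3.1} already suffice; otherwise the argument is the same.
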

\begin{proof}
Applying $\Delta$ on the first
equation of \eqref{eq1.1}, then multiplying the resulting equation by $\Delta u$ and
integrating over $\mathbb{R}^{3}$, after integration by parts, one
obtains
\begin{align}\label{eq3.26}
  \frac{1}{2}\frac{d}{dt}\|\Delta u\|_{L^{2}}^{2}+\|\nabla\Delta u\|_{L^{2}}^{2}&=-\int_{\mathbb{R}^{3}}\Delta((u\cdot\nabla) u)\cdot\Delta udx
  -\int_{\mathbb{R}^{3}}\Delta(\Delta d\cdot\nabla d)\cdot\Delta
  udx,
\end{align}
where we have used the fact that $\operatorname{div} u=0$ implies
that $\int_{\mathbb{R}^{3}}\Delta\nabla(\frac{|\nabla
d|^{2}}{2})\cdot\Delta u\text{d}x=0$. Applying \eqref{eq3.4} in Lemma \ref{le3.1},  the right hand side of \eqref{eq3.26} can be bounded as follows:
\begin{align*}
  -\int_{\mathbb{R}^{3}}\Delta((u\cdot\nabla)
  u)\cdot\Delta u dx&\leq \frac{1}{8}\|\nabla \Delta u\|_{L^{2}}^{2} +C\big(\|(\nabla u\cdot\nabla) u\|_{L^{2}}^{2}+\|(u\cdot\nabla)\nabla u\|_{L^{2}}^{2}\big)\nonumber\\
  &\leq \frac{1}{8}\|\nabla \Delta u\|_{L^{2}}^{2} +C\big(\|\nabla u\|_{L^{4}}^{4}+\|u\|_{L^{6}}^{2}\|\Delta u\|_{L^{3}}^{2}\big)\nonumber\\
   &\leq \frac{1}{8}\|\nabla \Delta u\|_{L^{2}}^{2} +C\big(\|\nabla u\|_{L^{2}}^{\frac{5}{2}}\|\nabla \Delta u\|_{L^{2}}^{\frac{3}{2}}+\|\nabla u\|_{L^{2}}^{2}\|\Delta u\|_{L^{2}}\|\nabla\Delta u\|_{L^{2}}\big)\nonumber\\
   &\leq\frac{1}{8}\|\nabla \Delta u\|_{L^{2}}^{2} +C(\|\Delta u\|_{L^{2}}^{2}+1);
  \end{align*}
\begin{align*}
  -\int_{\mathbb{R}^{3}}\Delta(\Delta d\nabla d)\cdot\Delta u dx&=\int_{\mathbb{R}^{3}}\nabla(\Delta d\nabla d)\cdot\nabla\Delta u dx\nonumber\\
  &\leq \frac{1}{8}\|\nabla\Delta u\|_{L^{2}}^{2}+C\big(\|\nabla\Delta d\nabla d\|_{L^{2}}^{2}+\|\Delta d\nabla ^{2}d\|_{L^{2}}^{2}\big)\nonumber\\
  &\leq \frac{1}{8}\|\nabla\Delta u\|_{L^{2}}^{2}+C\big(\|\nabla\Delta d\|_{L^{3}}^{2}\|\nabla d\|_{L^{6}}^{2}+\|\Delta d\|_{L^{4}}^{4}\big)\nonumber\\
  &\leq \frac{1}{8}\|\nabla\Delta u\|_{L^{2}}^{2}+C\big(\|\Delta d\|_{L^{2}}^{2}\|\nabla\Delta d\|_{L^{2}}\|\Delta^{2} d\|_{L^{2}}+\|\Delta d\|_{L^{2}}^{\frac{5}{2}}\|\Delta^{2} d\|_{L^{2}}^{\frac{3}{2}}\big)\nonumber\\
  &\leq \frac{1}{8}\|\nabla\Delta u\|_{L^{2}}^{2}+ \frac{1}{6}\|\Delta^{2} d\|_{L^{2}}^{2}+C\big(\|\nabla\Delta d\|_{L^{2}}^{2}+1\big).
\end{align*}
Hence, we infer from \eqref{eq3.26} that
\begin{align}\label{eq3.27}
  \frac{d}{dt}\|\Delta
  u\|_{L^{2}}^{2}+\frac{3}{2}\|\nabla\Delta u\|_{L^{2}}^{2}\leq\frac{1}{3}\|\Delta^{2} d\|_{L^{2}}^{2}+C\big(\|\Delta u\|_{L^{2}}^{2}+\|\nabla\Delta d\|_{L^{2}}^{2}+1\big).
\end{align}
Taking $\nabla\Delta$ on the third equation of \eqref{eq1.1},
then multiplying the resulting equation by $\nabla\Delta d$ and integrating over $\mathbb{R}^{3}$, one
obtains
\begin{align}\label{eq3.28}
  \frac{1}{2}\frac{d}{dt}\|\nabla\Delta d\|_{L^{2}}^{2}&+\|\Delta^{2} d\|_{L^{2}}^{2}
  =-\int_{\mathbb{R}^{3}}\nabla\Delta((u\cdot\nabla) d)\cdot\nabla\Delta d dx
  +\int_{\mathbb{R}^{3}}\nabla\Delta(|\nabla d|^{2}d)\cdot\nabla\Delta d
  dx.
\end{align}
Applying the Leibniz's rule, the facts  $|d|=1$ and \eqref{eq3.4} again, the right hand side of \eqref{eq3.28} can be bounded as follows:
\begin{align*}
   -&\int_{\mathbb{R}^{3}}\nabla\Delta((u\cdot\nabla) d)\cdot\nabla\Delta d dx
   =\int_{\mathbb{R}^{3}}\Delta((u\cdot\nabla) d)\cdot\Delta^{2} d dx\nonumber\\
   &\leq \frac{1}{8}\|\Delta^{2} d\|_{L^{2}}^{2} +C\big(\|(\Delta u\cdot\nabla) d\|_{L^{2}}^{2}+\|(\nabla u\cdot\nabla)\nabla d\|_{L^{2}}^{2}+\|(u\cdot\nabla)\Delta d\|_{L^{2}}^{2}\big)\nonumber\\
   &\leq \frac{1}{8}\|\Delta^{2} d\|_{L^{2}}^{2} +C\big(\|\Delta u\|_{L^{3}}^{2}\|\nabla d\|_{L^{6}}^{2}+\|\nabla u\|_{L^{4}}^{2}\|\Delta d\|_{L^{4}}^{2}+\|u\|_{L^{6}}^{2}\|\nabla\Delta d\|_{L^{3}}^{2}\big)\nonumber\\
   &\leq \frac{1}{8}\|\Delta^{2} d\|_{L^{2}}^{2} +C\big(\|\Delta u\|_{L^{2}}\|\nabla\Delta u\|_{L^{2}}+\|\nabla u\|_{L^{2}}^{\frac{5}{4}}\|\nabla \Delta u\|_{L^{2}}^{\frac{3}{4}}\|\Delta d\|_{L^{2}}^{\frac{5}{4}}\|\Delta^{2} d\|_{L^{2}}^{\frac{3}{4}}+\|\nabla\Delta d\|_{L^{2}}\|\Delta^{2} d\|_{L^{2}}\big)\nonumber\\
   &\leq \frac{1}{6}\|\Delta^{2} d\|_{L^{2}}^{2}+\frac{1}{4}\|\nabla\Delta u\|_{L^{2}}^{2}+C\big(\|\Delta u\|_{L^{2}}^{2}+\|\nabla\Delta d\|_{L^{2}}^{2}+1\big);
\end{align*}
\begin{align*}
  \int_{\mathbb{R}^{3}}&\nabla\Delta(|\nabla d|^{2}d)\cdot\nabla\Delta d
  dx=-\int_{\mathbb{R}^{3}}\Delta(|\nabla d|^{2}d)\cdot\Delta^{2}d
  dx\nonumber\\
  &=-\int_{\mathbb{R}^{3}}\big[\Delta(|\nabla d|^{2}) d+2\nabla(|\nabla d|^{2})\nabla d
   +|\nabla d|^{2}\Delta d\big]\cdot\Delta^{2}ddx\nonumber\\
  &\leq C\big(\|\nabla
  d\|_{L^{6}}\|\nabla\Delta d\|_{L^{3}}+\|\Delta d\|_{L^{4}}^{2}+\|\nabla d\|_{L^{6}}^{2}\|\Delta d\|_{L^{3}}\big)
  \|\Delta^{2} d\|_{L^{2}}\nonumber\\
  &\leq C\big(\|\Delta d\|_{L^{2}}^{\frac{1}{2}}\|\Delta^{2} d\|_{L^{2}}^{\frac{1}{2}}
  +\|\Delta d\|_{L^{2}}^{\frac{5}{4}}\|\Delta^{2}
  d\|_{L^{2}}^{\frac{3}{4}}+\|\Delta d\|_{L^{2}}^{\frac{5}{2}}\|\nabla\Delta
  d\|_{L^{2}}^{\frac{1}{2}}\big)\|\Delta^{2} d\|_{L^{2}}\nonumber\\
  &\leq\frac{1}{6}\|\Delta^{2} d\|_{L^{2}}^{2}+C\big(\|\nabla\Delta
  d\|_{L^{2}}^{2}+1\big).
\end{align*}
Hence, one obtains from \eqref{eq3.28} that
\begin{align}\label{eq3.29}
  \frac{d}{dt}\|\nabla \Delta d\|_{L^{2}}^{2}+\frac{4}{3}\|\Delta^{2} d\|_{L^{2}}^{2}\leq\frac{1}{2}\|\nabla \Delta u\|_{L^{2}}^{2}+C\big(\|\Delta u\|_{L^{2}}^{2}+\|\nabla\Delta d\|_{L^{2}}^{2}+1\big).
\end{align}
We conclude from \eqref{eq3.27} and \eqref{eq3.29} that
\begin{align}\label{eq3.30}
  \frac{d}{dt}\big(\|\Delta
  u\|_{L^{2}}^{2}+\|\nabla\Delta d\|_{L^{2}}^{2}\big)+\big(\|\nabla\Delta u\|_{L^{2}}^{2}+\|\Delta^{2}d\|_{L^{2}}^{2}\big)
  \leq C\big(\|\Delta u\|_{L^{2}}^{2}+\|\nabla\Delta d\|_{L^{2}}^{2}+1\big),
\end{align}
which gives us to the desired result \eqref{eq3.25} by the Gronwall's inequality. The proof of Lemma \ref{le3.2} is complete.
\end{proof}

Finally, by using the Sobolev embedding  $H^{2}(\mathbb{R}^{3})\rightarrow L^{\infty}(\mathbb{R}^{3})$, \eqref{eq3.25} leads to the BKM's criterion \eqref{eq3.2} immediately.
We complete the proof of  Theorem \ref{th1.1}.

\end{document}